\definecolor{webgreen}{rgb}{0,.5,0}
\definecolor{webbrown}{rgb}{.6,0,0}
\newcommand{\RR}{\mathbb{R}}
\newcommand{\ZZ}{\mathbb{Z}}
\newcommand{\NN}{\mathbb{N}}
\newcommand{\bounded}{{\cal S}}
\newcommand{\ag}{{\rm ag}}
\begin{document}

\theoremstyle{plain}
\newtheorem{theorem}{Theorem}
\newtheorem{corollary}[theorem]{Corollary}
\newtheorem{lemma}[theorem]{Lemma}
\newtheorem{proposition}[theorem]{Proposition}

\theoremstyle{definition}
\newtheorem{definition}[theorem]{Definition}
\newtheorem{example}[theorem]{Example}
\newtheorem{conjecture}[theorem]{Conjecture}

\theoremstyle{remark}
\newtheorem{remark}[theorem]{Remark}

\title{Badly approximable numbers, Kronecker's theorem, and diversity of
Sturmian characteristic sequences}

\author{Dmitry Badziahin\\
School of Mathematics and Statistics\\
University of Sydney\\
NSW 2006 \\
Australia \\
{\tt dzmitry.badziahin@sydney.edu.au} \\
\and
Jeffrey Shallit\thanks{Supported by NSERC Grant 2018-04118.} \\
School of Computer Science\\
University of Waterloo\\
Waterloo, ON  N2L 3G1 \\
Canada \\
{\tt shallit@uwaterloo.ca} }

\maketitle

\begin{abstract}
We give an optimal version of the classical
``three-gap theorem'' on the fractional
parts of $n \theta$, in the case where $\theta$ is an irrational number that is
badly approximable.  As a consequence, we deduce a version of
Kronecker's inhomogeneous approximation theorem in one dimension
for badly approximable numbers.  We apply these results to obtain an
improved measure of sequence diversity for characteristic Sturmian
sequences, where the slope is badly approximable.
\end{abstract}

\section{Introduction}

About twenty-five years ago, the second author \cite{Shallit:1996} published
an article in which several results about numbers with bounded partial
quotients were proved.  In this paper we improve those results ---
in some cases, optimally.

Let us recall what was proved previously.  If $\theta$ is a real number with
simple continued fraction $\theta = [a_0, a_1, a_2, \ldots ]$, then we say
that $\theta$ has {\it bounded partial quotients} or is {\it badly
approximable} if there exists a positive integer $B$ such that $|a_i| \leq B$
for all $i \geq 1$.  The set of real numbers with partial quotients bounded
by $B$ is denoted by $\bounded_B$.  For a survey about such numbers and their
properties, see \cite{Shallit:1992}.

{\it Kronecker's theorem} is a celebrated theorem about inhomogeneous
Diophantine approximation.  In the one-dimensional version, it
states that if $\theta$ is an irrational real number, $\beta$ is
a real number, and $N$ and $\epsilon$ are positive real
numbers, there exist integers $n, p$ with $n > N$ such that
$$ |n \theta - \beta - p | < \epsilon.$$
See, for example, \cite[Chap.~23]{Hardy&Wright:1985}.
Note that Kronecker's theorem provides no estimate of the sizes of the
numbers $n, p$, and indeed, no such estimate is possible, in general, since
(for example) the ratio $\beta/\theta$ could be arbitrarily large.

However, if $\theta$ is badly approximable and $\beta$ is bounded, then it is possible
to bound $n$ and $p$.  We recall Theorem 17 from \cite{Shallit:1996}:

\begin{theorem}\label{th1}
Let $\theta$ be an irrational real number, $0 < \theta < 1$, with
partial quotients bounded by $B$.  Let $0 \leq \beta < 1$ be a real
number.  Then for all $N \geq 1$ there exist integers $p, q$ with
$0 \leq p, |q| \leq (B+2) N^2$ such that $| p\theta - \beta - q |\leq {1 \over N}$.
\end{theorem}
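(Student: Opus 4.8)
The plan is to reduce the inhomogeneous problem to covering the circle $\RR/\ZZ$ by the successive multiples of a single, carefully chosen small ``step''. Write $p_k/q_k$ for the convergents of $\theta$ and let $\theta_k := \| q_k \theta\|$ denote the distance from $q_k\theta$ to the nearest integer. First I would select the least index $k$ with $\theta_k \leq 1/N$. Minimality gives $\theta_{k-1} > 1/N$, and since the standard estimate $\theta_{k-1} < 1/q_k$ holds, the chain $1/q_k > \theta_{k-1} > 1/N$ forces $q_k < N$. Thus $q_k\theta$ differs from the integer $p_k$ by exactly $\delta := q_k\theta - p_k$, with $|\delta| = \theta_k \leq 1/N$, and the denominator $q_k$ of this good rational approximation is itself smaller than $N$.

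The second step is to march around the circle in steps of size $|\delta|$. The points $\{m q_k\theta\} = \{m\delta\}$ for $m = 0,1,2,\ldots$ advance by $|\delta| \leq 1/N$ at each stage (replacing $\delta$ by $-\delta$ and stepping backwards if $\delta<0$, which is symmetric), so once $m$ is large enough that these points have wrapped once completely around $\RR/\ZZ$, they are $|\delta|$-dense and every point of the circle --- in particular $\beta$ --- lies within $|\delta| \leq 1/N$ of one of them. One full revolution is achieved for $m$ up to $M := \lceil 1/\theta_k\rceil$, so there is an $m$ with $0 \leq m \leq M$ and $\|m q_k\theta - \beta\| \leq 1/N$. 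Taking $p := m q_k \geq 0$ and letting $q$ be the nearest integer to $p\theta - \beta$ then yields $|p\theta - \beta - q| \leq 1/N$.

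It remains to bound $p$ and $q$, and this is exactly where badly approximability enters and where the quadratic estimate originates: $p = mq_k$ is a \emph{product} of the factor $m \leq M \approx 1/\theta_k$ and the denominator $q_k$. Here I would invoke the lower bound $\theta_k > 1/(q_{k+1}+q_k)$ to obtain $M \leq q_{k+1}+q_k$, and the bounded-partial-quotient hypothesis $a_{k+1}\leq B$ to obtain $q_{k+1} = a_{k+1}q_k + q_{k-1} < (B+1)q_k$. Combined with $q_k < N$ (hence also $q_{k+1} < (B+1)N$), this gives $p = mq_k \leq (q_{k+1}+q_k)q_k = q_kq_{k+1} + q_k^2 < (B+1)N^2 + N^2 = (B+2)N^2$. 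The bound on $q$ is then immediate from $0<\theta<1$ and $0\leq\beta<1$, since $q\leq p$ and $q\geq -1$, so $|q|\leq p < (B+2)N^2$. The cases $N=1$ and the degenerate $k=0$ should be dispatched separately, but they are trivial.

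The main obstacle is the initial calibration of $k$: the chosen step must simultaneously be small enough ($\theta_k \leq 1/N$, so that stepping resolves $\beta$ to within $1/N$) while having both a small denominator $q_k$ and a not-too-small size (so that the number of steps $M \approx 1/\theta_k$ stays controlled). It is precisely the badly approximable hypothesis, through $q_{k+1} < (B+1)q_k$, that prevents $\theta_k$ from dropping far below $1/N$ and thereby keeps $M$ of order $N$; without it, $\theta_k$ could be astronomically small and neither $M$ nor $p$ could be bounded at all. The only delicate point is the clean bookkeeping that balances the two competing order-$N$ quantities $q_k$ and $M$ into the stated constant $(B+2)$. I would note that the product structure $p=mq_k$ is the sole source of the $N^2$, and is the inefficiency that a sharper, three-gap analysis of the single orbit $\{p\theta\}$ is designed to remove.
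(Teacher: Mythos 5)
Your argument is correct, and it is worth noting that the paper itself never proves Theorem~\ref{th1}: the statement is simply quoted from \cite{Shallit:1996}. The meaningful comparison is therefore with the paper's proof of its improved version, Theorem~\ref{newkron}, and there your route is genuinely different. You fix a single good denominator $q_k$ (the least $k$ with $||q_k\theta||\le 1/N$, so that minimality plus $||q_{k-1}\theta||<1/q_k$ forces $q_k<N$) and walk around the circle in steps of $\delta=q_k\theta-p_k$, bounding the number of steps by $||q_k\theta||>1/(q_{k+1}+q_k)$ and $q_{k+1}\le (B+1)q_k$; the quadratic bound $(B+2)N^2$ then arises because your $p=mq_k$ is a product of two factors, each of order $N$. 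The paper instead applies the three-gap theorem to all the points $0,\{\theta\},\ldots,\{N\theta\},1$ simultaneously: $\beta$ falls into one of the $N+1$ gaps, each of length at most $f(B)/N$ by Theorem~\ref{th4}, so the nearest endpoint $\{n\theta\}$ with $n\le N$ already approximates $\beta$ to within $f(B)/(2N)$. Your approach buys elementarity and self-containedness (only the two standard convergent inequalities are needed, no three-gap machinery), while the paper's approach buys linear bounds $n,|p|\le N$ with error $f(B)/(2N)$ --- which is exactly the improvement the paper is about. Your closing observation, that the product structure $p=mq_k$ is the sole source of the $N^2$ and is what a three-gap analysis of the full orbit removes, is precisely the right diagnosis. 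Two harmless slips to clean up: $q_{k+1}<(B+1)q_k$ can degenerate to equality when $q_{k-1}=q_k$ (e.g., $a_1=1$), and ``$|q|\le p$'' fails in the corner case $p=0$, $q=-1$; in both situations the final bounds $p,|q|\le (B+2)N^2$ still hold, so the theorem is unaffected.
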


In this paper we improve the upper bound $(B+2)N^2$ to $\frac{f(B)}{2} N$,
where $f$ is a certain function that is bounded above by $(1+\sqrt{4/5})B
< 2B$.

Our main tool is an optimal and apparently new
estimate, for badly approximable numbers,
on the size of
largest interval in the celebrated ``three-gap theorem'' (aka the
Steinhaus conjecture)
\cite{Slater:1950,Florek:1951,Sos:1957,Suranyi:1958,Swierczkowski:1958,Halton:1965,Alessandri&Berthe:1998}.  This is done in Section~\ref{threegap}.

Finally, we apply these results to prove a new measure of sequence diversity
for the so-called characteristic Sturmian sequences.  Roughly speaking, this
measure shows that linearly-indexed subsequences of Sturmian sequences cannot
agree for ``too long''.

\section{The three-gap theorem}
\label{threegap}

Let us begin by recalling the three-gap theorem.  For a real irrational
number $\theta$, let $\{ \theta \}$ denote its fractional part, which can
also be written $\theta \bmod 1$.  Let $|| \theta ||$ denote the distance
from $\theta$ to the nearest integer, which is $\min\{\theta \bmod 1,
(-\theta) \bmod 1\}$.

Let $N$ be a positive integer, and sort the $N+2$ numbers
$$ 0, \{ \theta \}, \{ 2 \theta \}, \ldots, \{ N \theta \}, 1$$
in ascending order, viz.,
$$ s_0 = 0 < s_1 < s_2 < \cdots < s_N < 1 = s_{N+1} .$$
The {\it gaps\/} are the numbers $s_{i+1} - s_i$ for $0 \leq i \leq N$,
and the {\it gap set\/}
$G(\theta, N)$ is the set $\{ s_{i+1} - s_i \ : \ 0 \leq i \leq N \}$.
One version of the three-gap theorem is as follows:

\begin{theorem}
For integers $N \geq 1$ the set
$G(\theta, N)$ is always of cardinality either two or three, and if it is
of cardinality three, the larger of the three numbers is the sum of the
smaller two.
\end{theorem}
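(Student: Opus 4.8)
The plan is to pass to the circle $\mathbb{R}/\mathbb{Z}$ and exploit that the rotation $T\colon x\mapsto x+\theta$ is an isometry. Writing $P_k=\{k\theta\}$, the points $P_0,\dots,P_N$ (with $P_0$ playing the role of both $s_0=0$ and $s_{N+1}=1$) cut the circle into $N+1$ arcs, and the gaps $s_{i+1}-s_i$ are exactly the arc lengths. First I would record the elementary but crucial fact that $T$ carries the configuration $S=\{P_0,\dots,P_N\}$ to $T(S)=\{P_1,\dots,P_{N+1}\}$ preserving lengths and cyclic order, so $S$ and $T(S)$ have the same multiset of arc lengths.

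The count ``at most three'' I would get from a \emph{pullback} map. For an arc with clockwise endpoints $P_a$ (left) and $P_b$ (right), set $\psi(\text{arc})$ to be the arc with endpoints $P_{a-1},P_{b-1}$, which has the same length. Since $T^{-1}$ introduces only the single new point $P_{-1}=\{-\theta\}$ and removes only $P_N$, the image $\psi(\text{arc})$ is a genuine arc of $S$ except in three situations: $a=0$, $b=0$, or $P_{N+1}$ lies in the interior of the original arc. Thus $\psi$ is a length-preserving partial injection on the $N+1$ arcs that is undefined on at most three of them. Because each application strictly decreases the left index $a$, the map has no cycles, so its functional graph is a disjoint union of directed paths, one per ``sink'' (an arc on which $\psi$ is undefined). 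As $\psi$ preserves length, every arc on a given path has the same length; hence there are at most three distinct gap lengths, realized by the three sink arcs: the arc immediately clockwise of $P_0$, the arc immediately counterclockwise of $P_0$, and the arc whose interior contains $P_{N+1}$. I denote the first two lengths $d^{+}$ and $d^{-}$.

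For the sum relation I would analyze the third sink, the arc $(P_a,P_b)$ containing $P_{N+1}$ in its interior. The point $P_{N+1}$ splits this arc into two subarcs which, after pulling back by a suitable power of $T$, should be congruent to the two arcs adjoining $P_0$; this would show the third length equals $d^{+}+d^{-}$, which is automatically the largest. Equivalently, I can reach the same conclusion by the bookkeeping ``$T(S)\to S$'': deleting $P_{N+1}$ from $T(S)$ merges two arcs of lengths $u,v$ into one of length $u+v$, while inserting $P_0$ splits the arc around $0$ into $d^{+}$ and $d^{-}$; matching the two equal multisets of lengths forces the merged value to be $d^{+}+d^{-}$ and $\{u,v\}=\{d^{+},d^{-}\}$. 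Either way, when all three values genuinely occur, the largest is the sum of the other two.

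I expect the main obstacle to be the boundary bookkeeping at the indices $0$, $N$, and $N+1$: one must check precisely that $\psi$ fails on exactly the three claimed arcs and no others, and that the subarcs cut off by $P_{N+1}$ are genuinely congruent to the gaps flanking $P_0$ rather than to some smaller gap created by the extra index $N+1$. Handling these edge cases carefully --- rather than the rotation argument itself, which is essentially a one-line isometry observation --- is where the real work lies.
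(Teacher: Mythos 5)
Your proposal is essentially correct, but there is no proof in the paper to compare it against: the paper quotes this statement as the classical Steinhaus three-gap theorem and simply cites the literature (Slater, S\'os, \'Swierczkowski, Halton, van Ravenstein, Alessandri--Berth\'e), so any self-contained argument is necessarily "a different route." Yours is the standard dynamical/combinatorial one: the rotation $x \mapsto x+\theta$ is an isometry of the circle, the pullback $\psi$ of arcs is a length-preserving partial injection whose functional graph is acyclic (indices strictly decrease), so every arc flows along a $\psi$-path to one of at most three sinks --- the two arcs flanking $P_0$ and the arc whose interior contains $P_{N+1}$ --- giving at most three gap lengths; the multiset bookkeeping between $S$ and $T(S)$ then forces the third length to be $d^{+}+d^{-}$. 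This plan is sound, and the boundary issues you flag are genuinely the only ones, but two patches are needed to match the statement. First, the theorem asserts cardinality two \emph{or} three, so you need the one-line lower bound: if all $N+1$ gaps were equal, each would be $1/(N+1)$, making $\{\theta\}$ rational. Second, in the bookkeeping step you must distinguish whether $P_0$ lies in the interior of the merged arc: if it does not, the merged arc has endpoints in $S$ and no point of $S$ inside, so it \emph{is} the arc of $S$ containing $P_{N+1}$, and matching multisets gives its length as $d^{+}+d^{-}$; if it does, the arc of $S$ containing $P_{N+1}$ is adjacent to $P_0$, there is no third sink, and the sum claim is vacuous --- without this case split the identification of the merged arc with the "third gap" is not justified. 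Finally, note what each approach buys: your argument proves the qualitative trichotomy, but what the paper actually uses downstream (in the proof of its Theorem~4) is van Ravenstein's quantitative refinement, stated immediately after this theorem, which identifies the gaps explicitly as $||q_k\theta||$, $||q_{k-1}\theta||-(l-1)||q_k\theta||$, etc., in terms of the continued fraction of $\theta$; your isometry argument, as it stands, does not yield those explicit values.
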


Suppose the continued fraction expansion of $\theta$ is $[a_0; a_1, \ldots
]$, and the convergents to $\theta$ are $p_i/q_i$ for $i \geq 0$. In
addition, define the following notation for irrational $\theta $:
$$
\theta_k:= [0;a_k,a_{k+1}, a_{k+2},\ldots];\quad
\phi_k:=[0;a_k,a_{k-1},\ldots, a_1].
$$
Let us recall some basic formulae about continued fractions, most of which
can be found in \cite[Chap.~10]{Hardy&Wright:1985}:
\begin{equation}\label{eq_cf4}
\phi_k = \frac{q_{k-1}}{q_k};
\end{equation}
\begin{equation}\label{eq_cf1}
q_k||q_{k-1}\theta|| + q_{k-1}||q_k\theta|| = 1;
\end{equation}
\begin{equation}\label{eq_cf2}
q_k||q_k\theta|| = \frac{1}{a_{k+1}+\theta_{k+2}+\phi_k};
\end{equation}
\begin{equation}\label{eq_cf3}
q_{k}||q_{k-1}\theta|| = \frac{1}{1+\theta_{k+1}\phi_k}.
\end{equation}

It turns out that the gaps are quantifiable in terms of the continued
fraction for $\theta$. More precisely, Van Ravenstein
\cite{van.Ravenstein:1988} proved
\begin{theorem}
$$
G(\theta,N) \subseteq
\begin{cases}
\{||q_k\theta||, \ 
||q_{k-1}\theta||, \ ||q_k\theta||+||q_{k-1}\theta||\}, & \\
\quad\quad\quad\quad	\text{if $q_k\le N< q_k+q_{k-1}$};\\
\{||q_{k}\theta||,\ ||q_{k-1}\theta||-(l-1)||q_k\theta||,\ 
||q_{k-1}\theta|| - l||q_k\theta||\}, & \\
\quad\quad\quad\quad \text{ if $lq_k+q_{k-1}\le N< (l+1)q_k+q_{k-1}$ for $0<l<a_{k+1}$}.
\end{cases}
$$
\end{theorem}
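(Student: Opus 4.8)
The plan is to exploit the rotation invariance of $x\mapsto x+\theta \pmod 1$ together with the three-gap theorem stated above to reduce the problem to identifying only two gap lengths. Indeed, that theorem already guarantees that $G(\theta,N)$ has at most three elements and, when it has three, the largest is the sum of the other two; hence it suffices to pin down the two ``primitive'' gaps, after which the third is forced. I would take these primitives to be the two gaps adjacent to the point $0$: the gap on the right is $\lambda:=\min_{1\le n\le N}\{n\theta\}$, the distance to the closest point above $0$, realized at some index $s\le N$; the gap on the left is $\mu:=\min_{1\le n\le N}(1-\{n\theta\})$, the distance to the closest point below $0$, realized at some index $t\le N$. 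The whole theorem then amounts to identifying $s,t$ and the values $\lambda,\mu$ in terms of the continued fraction data.

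For this identification I would use the signed approximation errors $q_k\theta-p_k=(-1)^k\,||q_k\theta||$, whose alternating signs encode, via the parity of $k$, whether $\{q_k\theta\}$ lies just above $0$ or just below $1$. Since in both ranges of $N$ one has $N<q_{k+1}=a_{k+1}q_k+q_{k-1}$, the denominator $q_k$ is the largest convergent denominator not exceeding $N$, so by the best-approximation property $q_k$ furnishes the closest approach to $0$ from one side, giving the primitive gap $||q_k\theta||$. For the other side I would bring in the intermediate fractions with denominators $q_{k-1}+j q_k$ and the additive identity $||(q_{k-1}+j q_k)\theta||=||q_{k-1}\theta||-j\,||q_k\theta||$, valid for $0\le j\le a_{k+1}$; this follows by adding the signed errors of $q_{k-1}$ and $q_k$, which have opposite signs for consecutive indices. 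The best-approximation theory for intermediate fractions then shows that, among all indices $\le N$, the closest approach to $0$ from the remaining side is attained at the largest admissible intermediate denominator.

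Carrying this out regime by regime yields the statement. When $q_k\le N<q_k+q_{k-1}$ no intermediate point has yet appeared, so the two primitives are exactly $||q_k\theta||$ and $||q_{k-1}\theta||$, and the third gap is their sum $||q_k\theta||+||q_{k-1}\theta||$. When $lq_k+q_{k-1}\le N<(l+1)q_k+q_{k-1}$ with $0<l<a_{k+1}$, the intermediate denominators $q_{k-1}+j q_k$ for $1\le j\le l$ have entered, so the closest approach from that side is now at $j=l$, yielding the primitive $||q_{k-1}\theta||-l\,||q_k\theta||$ alongside $||q_k\theta||$; their sum $||q_{k-1}\theta||-(l-1)||q_k\theta||$ is the third gap, matching the claimed set. (The first regime is the case $l=0$ of the second, excluded there only because $l>0$ is imposed.)

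The main obstacle is the rigorous best-approximation bookkeeping of the second paragraph: proving that the specific (intermediate) convergent denominators $q_k$ and $q_{k-1}+l q_k$ are genuinely the closest neighbors of $0$ for the given range of $N$ --- that no smaller or differently structured index $\le N$ produces a closer point on either side --- and correctly tracking the parity-dependent signs so that the minimal from-above and from-below distances are assigned to the correct continued-fraction quantities. Once these nearest-neighbor identifications are secured, the passage to all three gap lengths is immediate from the three-gap theorem stated above.
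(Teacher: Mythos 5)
The paper never proves this theorem --- it is quoted with a citation to van Ravenstein --- so your proposal can only be measured against the standard proofs in the literature, whose general skeleton (identify the two closest returns to $0$, one on each side, then show every gap is one of those two lengths or their sum) you do follow. Your continued-fraction identifications are also correct: in the range $q_k\le N<q_{k+1}$ the closest return overall is at $n=q_k$ by the best-approximation property, the closest return on the opposite side is at the semiconvergent $n=q_{k-1}+lq_k$ with $||(q_{k-1}+jq_k)\theta||=||q_{k-1}\theta||-j\,||q_k\theta||$ for $0\le j\le a_{k+1}$, and the sums match the claimed third element in both regimes.

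The genuine gap is in your reduction step. The version of the three-gap theorem stated in the paper says only that $G(\theta,N)$ has at most three elements and that, \emph{when it has exactly three}, the largest is the sum of the other two. Knowing additionally that the two gaps adjacent to $0$ have lengths $\lambda$ and $\mu$ does \emph{not} force the remaining length to be $\lambda+\mu$: consistent with that weak statement, $\mu$ (say) could itself be the largest gap, in which case the third length would be $\mu-\lambda$, not $\lambda+\mu$; and if only two lengths occur, or if $\lambda=\mu$, the weak statement puts no constraint at all on the gaps you have not examined. What your argument actually requires is the refined three-distance theorem: \emph{every} gap has length $\lambda$, $\mu$, or $\lambda+\mu$, where $\lambda=\{s\theta\}$ and $\mu=1-\{t\theta\}$ are specifically the two gaps adjacent to $0$. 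That refinement is exactly the content of the rotation/neighbour argument (the right neighbour of $\{m\theta\}$ is $\{(m+s)\theta\}$ when $m\le N-s$, is $\{(m-t)\theta\}$ when $m\ge t$, and is $\{(m+s-t)\theta\}$ in the remaining case, these three cases giving gaps $\lambda$, $\mu$, $\lambda+\mu$, and the case analysis is exhaustive because $s+t\ge N+1$). You invoke ``rotation invariance'' in your opening sentence, but it never actually enters your argument, and without it the claimed containment simply does not follow from the weak statement. By contrast, the bookkeeping you flag as the ``main obstacle'' --- that $q_k$ and $q_{k-1}+lq_k$ really are the one-sided closest returns among all indices $n\le N$ --- is the easier half: it is the classical theory of semiconvergents as one-sided best approximations and can be cited; it is the neighbour structure, not the nearest-return identification, that your sketch is missing.
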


Let $H(\theta, N) = \max G(\theta, N)$, the largest gap in the gap set.
For each integer $B\ge 1$ we set
$$
f(B) := \sup_{\theta\in \bounded_B }\; \sup_{N\in\NN} \ N \cdot H(\theta,N).
$$
As we will see soon, the values of $f(B)$ are always finite. Moreover it is
easy to see that $f(B)$ is the smallest value that satisfies the following
property: for all $\theta\in \bounded_B$, for all $N\in\NN$ and for all $g\in
G(\theta, N)$ we have $g\le f(B) / N$. The core result of this paper is

\begin{theorem}\label{th4}
Let $B \geq 1$ be an integer.
Then
\begin{equation}\label{eq_fd}
f(B) = \begin{cases}
\displaystyle 1+ \frac{(a+1)^2}{2\sqrt{a^2+2a}}, &
\text{if } B=2a,\; a\in\NN;\\[2ex]
\displaystyle 1+\frac{a^2+3a+2}{\sqrt{4a^2+12a+5}}, & \text{if } B= 2a+1,\; a\in
\ZZ_{\ge 0} .
\end{cases}
\end{equation}
\label{badthm}
\end{theorem}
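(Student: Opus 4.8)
The plan is to compute the supremum $\sup_{N} N \cdot H(\theta, N)$ for a fixed $\theta \in \bounded_B$, and then optimize over $\theta$. Using Van Ravenstein's theorem, for each $N$ the largest gap $H(\theta, N)$ is determined by the continued fraction data of $\theta$ together with the position of $N$ relative to the denominators $q_k$. I would split into the two ranges given there. In the first range $q_k \le N < q_k + q_{k-1}$, the largest gap is $\|q_k\theta\| + \|q_{k-1}\theta\|$, so $N \cdot H = N(\|q_k\theta\| + \|q_{k-1}\theta\|)$, which is increasing in $N$ and hence maximized just below $N = q_k + q_{k-1}$. In the second range $lq_k + q_{k-1} \le N < (l+1)q_k + q_{k-1}$ with $0 < l < a_{k+1}$, the largest of the three listed quantities is $\|q_{k-1}\theta\| - (l-1)\|q_k\theta\|$, and again $N \cdot H$ is increasing in $N$, so the maximum over this range is approached as $N \to (l+1)q_k + q_{k-1}$ from below. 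Thus the candidate extrema occur at the right endpoints of these blocks, and I expect the dominant contribution to come from the first range, where the gap is largest.

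The next step is to convert these endpoint evaluations into closed-form expressions in the continued fraction quantities $\theta_k$ and $\phi_k$, using the formulae \eqref{eq_cf1}--\eqref{eq_cf3}. Concretely, at the critical value $N \to q_k + q_{k-1}$ in the first range, I would write
$$
N \cdot H(\theta, N) \longrightarrow (q_k + q_{k-1})\bigl(\|q_k\theta\| + \|q_{k-1}\theta\|\bigr),
$$
and expand each factor. Dividing through by $q_k$ and using $\phi_k = q_{k-1}/q_k$ together with \eqref{eq_cf2} and \eqref{eq_cf3}, this becomes a rational function purely of $a_{k+1}$, $\theta_{k+2}$, and $\phi_k$. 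The goal is to obtain an explicit function $F(a_{k+1}, \theta_{k+1}, \phi_k)$ whose supremum over admissible tails $\theta_{k+2}$ and backward expansions $\phi_k$, subject to all partial quotients being bounded by $B$, yields $f(B)$.

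The optimization is the heart of the argument. Since $\theta_{k+2} = [0; a_{k+2}, a_{k+3}, \ldots]$ and $\phi_k = [0; a_k, \ldots, a_1]$ range (in the limit of long expansions) over intervals determined by the bound $B$, I would identify for fixed $a_{k+1}$ the monotonicity of $F$ in each of $\theta_{k+2}$ and $\phi_k$ separately. I expect $F$ to be extremized when the surrounding partial quotients are pushed to their extreme admissible values, namely all equal to $B$, so that $\theta_{k+2}$ and $\phi_k$ both tend to the fixed point $[0; \overline{B}] = (\sqrt{B^2+4}-B)/2$; this is the standard mechanism by which the worst-case badly approximable number is the one with constant partial quotients $B$. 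One must then maximize over the single remaining free integer $a_{k+1} \in \{1, \ldots, B\}$. Carrying out this discrete maximization and simplifying the resulting algebraic expression should produce the two cases in \eqref{eq_fd}, the parity of $B$ entering because the optimal central partial quotient $a_{k+1}$ behaves differently depending on whether $B$ is even or odd.

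The main obstacle will be the optimization step: verifying rigorously that the supremum is attained in the limiting constant-$B$ configuration rather than at some irregular expansion, and handling the two ranges of Van Ravenstein's theorem simultaneously to confirm that the first range genuinely dominates. In particular I would need to check the sign of the relevant partial derivatives of $F$ with respect to $\theta_{k+2}$ and $\phi_k$ carefully, since a change of sign would shift the optimum to a boundary configuration of a different type, and I would need to confirm that no configuration in the second range of the theorem exceeds the value found in the first. The final simplification into the clean closed forms of \eqref{eq_fd}, and the verification that this matches the stated bound $f(B) \le (1+\sqrt{4/5})B$, is then a routine but delicate algebraic computation.
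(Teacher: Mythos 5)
Your overall skeleton --- Van Ravenstein's two ranges, evaluation at the right endpoint of each block, a change of variables to continued-fraction data, then a monotonicity analysis --- is exactly the paper's route. But the step you yourself identify as ``the heart of the argument'' rests on a false premise. You claim that the extremum occurs ``when the surrounding partial quotients are pushed to their extreme admissible values, namely all equal to $B$,'' so that $\theta_{k+2}$ and $\phi_k$ tend to $[0;\overline{B}] = (\sqrt{B^2+4}-B)/2$. This confuses extremizing the partial quotients with extremizing the \emph{value} of the continued fraction: since $[0;a_1,a_2,\ldots]$ is decreasing in the odd-indexed quotients and increasing in the even-indexed ones, the extreme values of $\bounded_B$ are attained by the \emph{alternating} expansions, $\max(\bounded_B \cap (0,1)) = [0;\overline{1,B}] = \frac{\sqrt{B^2+4B}-B}{2}$ and $\min \bounded_B = [0;\overline{B,1}] = \frac{\sqrt{B^2+4B}-B}{2B}$ (this is precisely the paper's Lemma~\ref{lem1}); the constant-$B$ number is neither. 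The discrepancy is visible in the answer itself: for $B=2a$ one has $a^2+2a = (B^2+4B)/4$, and for $B=2a+1$ one has $4a^2+12a+5 = B^2+4B$, so \eqref{eq_fd} carries the signature $\sqrt{B^2+4B}$ of the alternating pattern, whereas your extremizer would produce $\sqrt{B^2+4}$ terms and a formula that cannot match the theorem. (Corollary~\ref{maxgap} makes the point explicitly: the extremal $\theta$ is $[0,B,1,B,1,\ldots]$, not $[0;\overline{B}]$.)

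Your second guiding expectation is also backwards: you predict the first range $q_k \le N < q_k+q_{k-1}$ dominates ``where the gap is largest,'' but for every $B \ge 2$ the supremum comes from the second range. There the gap $\|q_{k-1}\theta\|-(l-1)\|q_k\theta\|$ is smaller, but $N$ can be larger by roughly a factor of $l$, and the product $N \cdot H$ is maximized at the interior block $l \approx B/2$ (forcing $a_{k+1}=B$); the parity of $B$ enters through this discrete optimization over $l$, not, as you suggest, through the choice of $a_{k+1}$. The first-range value, $1+ \frac{(B+1)\sqrt{B^2+4B}+B^2-B}{2B^2+2B+1}$ as in \eqref{ineq_case1}, is strictly smaller for $B\ge 2$ and equals $f(B)$ only when $B=1$, where the second range is vacuous; moreover, in that first range the optimum requires $x = a_{k+1}+\theta_{k+2}$ \emph{minimized} (so $a_{k+1}=1$), the opposite of what happens in the dominant case. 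Finally, a point your ``limit of long expansions'' remark glosses over: $y=\phi_k = q_{k-1}/q_k$ is rational, hence never attains the quadratic-irrational supremum $y'$, and one must show the deficit is harmless --- the paper does this via the bound $y-y' < q_k^{-2}$ together with an estimate on $\partial F/\partial y$, absorbing the error into the slack between $N$ and $(l+1)q_k+q_{k-1}$. Without correcting the location of the extremum and the case comparison, the computation you outline would terminate in a formula different from \eqref{eq_fd}.
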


In what follows we always assume that $\theta\in \bounded_B$.
First, we prove the following easy lemma:

\begin{lemma}\label{lem1}
$$
\min \bounded_B = [0;\overline{B,1}] = \frac{\sqrt{B^2+4B} - B}{2B};
$$$$
\max \bounded_B  = [\overline{B;1}] = \frac{\sqrt{B^2+4B}+B}{2}.
$$
\end{lemma}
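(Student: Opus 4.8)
The plan is to locate the extreme elements of $\bounded_B$ by exploiting the monotonicity of the continued fraction map and then to evaluate the two periodic continued fractions that result. The key tool is the standard comparison rule for infinite simple continued fractions: if $\alpha=[a_0;a_1,a_2,\ldots]$ and $\beta=[b_0;b_1,b_2,\ldots]$ first disagree at index $k$, then $\alpha>\beta$ precisely when $(-1)^k(a_k-b_k)>0$. In other words, the value is strictly increasing in each even-indexed partial quotient and strictly decreasing in each odd-indexed one. Since every $\theta\in\bounded_B$ has $1\le a_i\le B$ for $i\ge 1$ (and, so that the extrema are finite, $0\le a_0\le B$), this immediately suggests the extremizers: to maximize one should push every even-indexed partial quotient up to $B$ and every odd-indexed one down to $1$, and to minimize a positive $\theta$ one does the reverse, taking $a_0=0$.

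For the maximum this greedy prescription produces $[B;1,B,1,\ldots]=[\overline{B;1}]$, and for the minimum it produces $[0;B,1,B,1,\ldots]=[0;\overline{B,1}]$; both genuinely lie in $\bounded_B$, since their partial quotients with $i\ge 1$ all belong to $\{1,B\}$. To confirm that these greedy candidates are in fact the true supremum and infimum, I would appeal to the comparison rule directly. Any admissible $\theta$ that is not the extremizer first deviates from it at some index $k$, and at this first disagreement the extremal partial quotient is $\ge$ the alternative when $k$ is even and $\le$ it when $k$ is odd; by the alternating sign rule this places the extremizer strictly on the correct side of $\theta$ in every case. Hence the sup and inf are attained exactly at these two periodic points.

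It then remains only to evaluate them. Writing $x=[\overline{B;1}]$ and unfolding one period gives the self-consistency equation
$$ x = B + \cfrac{1}{\,1+\cfrac{1}{x}\,}, $$
which reduces to $x^2-Bx-B=0$ and hence $x=\tfrac12\bigl(B+\sqrt{B^2+4B}\bigr)$, the claimed value of $\max\bounded_B$. For the minimum one may either repeat the computation for $y=[0;\overline{B,1}]$, getting $By^2+By-1=0$ and $y=\tfrac{1}{2B}\bigl(\sqrt{B^2+4B}-B\bigr)$, or simply note that $[0;\overline{B,1}]$ is the reciprocal of $[\overline{B;1}]$, so that $\min\bounded_B=1/\max\bounded_B$; rationalizing $1/x$ recovers the same expression. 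The only genuinely delicate step is the extremization itself — making the comparison rule operate uniformly across infinitely many partial quotients and fixing the convention on $a_0$ that keeps $\max\bounded_B$ finite — after which the lemma collapses to solving a quadratic.
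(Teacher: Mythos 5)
Your proposal is correct and follows essentially the same route as the paper: the paper's proof consists precisely of invoking the monotonicity of $[a_0;a_1,a_2,\ldots]$ in its even-indexed (increasing) and odd-indexed (decreasing) partial quotients, exactly your comparison rule, leaving the identification of the extremizers and the quadratic evaluations $x^2-Bx-B=0$ and $By^2+By-1=0$ implicit. Your write-up is in fact more complete than the paper's one-sentence argument, since you also carry out the evaluation of the periodic continued fractions and flag the convention $0\le a_0\le B$ needed for the extrema to be finite.
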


\begin{proof}
The result immediately implies from the following fact: as a real function of
its partial quotients, the continued fraction
$$
\theta = [a_0;a_1,a_2,\ldots]
$$
is monotonically increasing in the even-numbered partial quotients $a_0,
a_2,\ldots$, and is monotonically decreasing in the odd-numbered partial
quotients $a_1, a_3, \ldots$.
\end{proof}

\begin{proof}[Proof of Theorem~\ref{badthm}.]

\ \vphantom{a}\\

\noindent{\it Case 1.} Consider $q_k\le N< q_{k}+q_{k-1}$. Then we have
$$
\sup_{q_k\le N <q_k+q_{k-1}} \ N  \cdot H(\theta, N) \le
(q_k+q_{k-1}-1)(||q_k\theta||+||q_{k-1}\theta||).
$$
Moreover, this upper bound is sharp. We first estimate the value of
\begin{equation}
(q_k+q_{k-1})(||q_k\theta||+||q_{k-1}\theta||).
\label{sum1}
\end{equation}
Expanding and then simplifying \eqref{sum1}, we get,
using \eqref{eq_cf1} and \eqref{eq_cf2}, that
\begin{equation}\label{eq2}
q_k||q_k\theta||\! + q_{k-1}||q_{k-1}\theta||\! +
q_k||q_{k-1}\theta||\!+q_{k-1}||q_k\theta||\!
=\!
1+
\frac{1}{a_{k+1}+\theta_{k+2}+\phi_k}+\frac{1}{a_k+\theta_{k+1}+\phi_{k-1}}.
\end{equation}
Introduce two variables $x = a_{k+1} + \theta_{k+2}$ and $y = \phi_k$. Then
it is easy to check that $\theta_{k+1} = 1/x$ and $a_k + \phi_{k-1} = 1/y$.
Finally the expression \eqref{sum1} simplifies to
\begin{equation}\label{eq1}
1 + \frac{1}{x+y} + \frac{1}{x^{-1}+y^{-1}} = 1+ \frac{xy+1}{x+y}.
\end{equation}
By looking at the partial derivatives of the right-hand side of \eqref{eq1},
we see that it grows monotonically in $x$ when $y>1$ and decreases
monotonically when $0<y<1$. By symmetry the same is true for $y$. Now, since
$x>1$ and $y<1$,~\eqref{eq1} is maximized when $y$ is maximized and $x$ is
minimized. Since $\theta\in \bounded_B $, Lemma~\ref{lem1} says that the
minimum possible $x$ is $\frac{\sqrt{B^2+4B}+B}{2B}$. Among all values $y'\in
\bounded_B$ the maximum possible is $y' = \frac{\sqrt{B^2+4B}-B}{2}$. Then
$xy'=1$ and
$$
x+y' = \frac{(B+1)\sqrt{B^2+4B} +B-B^2}{2B}.
$$
Finally,
$$
1+\frac{xy'+1}{x+y'} = 1+\frac{4B}{(B+1)\sqrt{B^2+4B}-B^2+B} =
1+\frac{(B+1)\sqrt{B^2+4B} +B^2-B}{2B^2 +2B +1}.
$$

However, in our case, $y$ is rational with partial quotients bounded by
$B$. This value can be slightly bigger than $y'$.  On the other
hand, for all such $y$ there exists $\xi\in \bounded_B$ such that $y$ is a
convergent of $\xi$. Since $y = \phi_k = \frac{q_{k-1}}{q_k}$, this
implies that for the maximum possible $y$,
$$
y - y' < \frac{1}{q_k^2}.
$$
Then we have
$$
\frac{xy+1}{x+y} - \frac{xy'+1}{x+y'} < \frac{\partial}{\partial
y}\left(\frac{xy''+1}{x+ y''}\right)(y-y') =
\frac{x^2-1}{(q_k(x+y''))^2},
$$
where $y''$ is some value between $y$ and $y'$. Next, we have that
$x<1+1/B$ and $x+y' > 2$. Therefore
$$
\frac{xy+1}{x+y} - \frac{xy'+1}{x+y'} < \frac{2B+1}{4B^2 q_k^2} <
||q_k\theta|| + ||q_{k-1}\theta|| .
$$
Finally, we have that
\begin{equation}
\sup_{q_k\le N <q_k+q_{k-1}} \ N \cdot H(\theta, N) \le 1+
\frac{(B+1)\sqrt{B^2+4B} +B^2-B}{2B^2 +2B +1}.
\end{equation}

Note that this estimate is sharp. That is, for all $\epsilon>0$, one can find
$\theta\in \bounded_B$ and $k\in\NN$ such that
$$
\sup_{q_k\le N <q_k+q_{k-1}} \ N \cdot H(\theta, N) > 1+
\frac{(B+1)\sqrt{B^2+4B} +B^2-B}{2B^2 +2B +1}-\epsilon.
$$
Therefore we have
\begin{equation}\label{ineq_case1}
\sup_{\theta\in \bounded_B}\;\sup_{k\in \NN}\; \max_{q_k\le N<q_k+q_{k-1}}
N\cdot H(\theta,N) = 1+ \frac{(B+1)\sqrt{B^2+4B} +B^2-B}{2B^2 +2B
+1}.
\end{equation}

\bigskip

\noindent{\it Case 2.} Assume that $lq_k+q_{k-1}\le N< (l+1)q_k+q_{k-1}$ for
some integer $l$ between 1 and $a_{k+1}-1$. Then we have
$$
\sup_{lq_k+q_{k-1}\le N <(l+1)q_k+q_{k-1}}  \ N \cdot H(\theta,
N) \le ((l+1)q_k+q_{k-1}-1)(||q_{k-1}\theta||-(l-1)||q_k\theta||).
$$
We proceed as in Case 1. Define $x = a_{k+1} +
\theta_{k+2}$ and $y = \phi_k$. We computed the values
$q_k||q_k\theta||$ and $q_{k-1}||q_{k-1}\theta||$ in the previous
case. Also from~\eqref{eq_cf3} we compute
$$
q_k||q_{k-1}\theta|| = \frac{1}{1+\theta_{k+1}\phi_k} =
\frac{x}{x+y}.
$$
Finally from~\eqref{eq_cf1}, we have $q_{k-1}||q_k\theta|| = 1 -
q_k||q_{k-1}\theta|| = \frac{y}{x+y}$. Now we expand
\begin{align*}
((l+1)q_k+q_{k-1})(||q_{k-1}\theta||-(l-1)||q_k\theta||) &=
\frac{1-l^2}{x+y} + \frac{(l+1)x}{x+y} - \frac{(l-1)y}{x+y} +
\frac{xy}{x+y}.\\
& = \frac{xy -(l-1)y + (l+1)x -l^2+1}{x+y}.
\end{align*}
Let the right-hand side be denoted by $F(x,y)$. We look for its maximum.
$$
\frac{\partial}{\partial x} F(x,y) = \frac{y^2 + 2ly
+l^2-1}{(x+y)^2} = \frac{(y+l)^2-1}{(x+y)^2}.
$$
Since $0<y<1$, $F(x,y)$ is monotonically increasing in $x$.
Therefore, by Lemma~\ref{lem1}, the maximum is achieved at $x =
\frac{\sqrt{B^2+4B}+B}{2}$. Next,
$$
\frac{\partial}{\partial y} F(x,y) = \frac{x^2 - 2lx
+l^2-1}{(x+y)^2} = \frac{(x-l)^2-1}{(x+y)^2}.
$$
Notice that $x>B\ge l+1$. Therefore $F(x,y)$ monotonically increases in $y$.
Among all values $y'\in \bounded_B$ the maximum possible is $y' =
\frac{\sqrt{B^2+4B}-B}{2}$. By using the same argument as in case 1, the
maximum rational value $y$ satisfies $y-y' < q_k^{-2}$.

We have found that for each $l$ between $1$ and $B-1$, the maximum value of
$F(x,y)$ is achieved at the same values of $x$ and $y'$ from $\bounded_B$.
Now let us find $l$ for which the value of $F(x,y')= F(x,y',l)$ is maximal.
If we look at $F(x,y')$ as a function of $l$ and look at its partial
derivative we get
$$
\frac{\partial}{\partial l} F(x,y',l) = \frac{-2l+x-y'}{x+y'}.
$$
Notice that $x-y' = B$; therefore $F(x,y,l)$ is maximal for $l = B/2$
when $B$ is even and $l = \frac{B\pm 1}{2}$ if $B$ is odd.
One can easily check that, since $x-y'= B$ the values
$F(x,y',(B+1)/2)$ and $F(x,y',(B-1)/2)$ coincide. So we can pick
either of these two values of $l$; let us choose $l = (B-1)/2$.

\bigskip

\noindent{\it Case 2.1.} Let $B=2a$. Then $l=a$ and
$$
F(x,y',l) = \frac{xy'+x+y'+1+l(x-y') - l^2}{x+y'} = 1+
\frac{(a+1)^2}{2\sqrt{a^2+2a}}.
$$

Now we compute
$$
F(x,y,l)-F(x,y',l) = \frac{(x-l)^2-1}{(x+y'')^2}(y-y')
<\frac{(a+1)^2}{a^2 q_k^2}.
$$
The last is definitely smaller than
$$
||q_{k-1}\theta|| - (l-1)||q_k\theta|| = \frac{x -
(l-1)}{q_k(x+y)}>\frac{a}{2(a+1)q_k}.
$$
Finally we compute
\begin{equation}\label{ineq_case21}
\sup_{\theta\in \bounded_B }\;\sup_{k\in \NN}\; \max_{q_k+q_{k-1}\le
N<q_{k+1}} \ N \cdot H(\theta,N) = 1+ \frac{(a+1)^2}{2\sqrt{a^2+2a}}.
\end{equation}

\noindent{\it Case 2.2.} Let $B=2a+1$. Then $l=a$ and
$$
F(x,y',l) = 1+ \frac{xy+1 + a(2a+1) - a^2}{x+y} =
1+\frac{a^2+3a+2}{\sqrt{4a^2+12a+5}}.
$$
By computations similar to those in Case 2.1, we derive that $F(x,y,l)
- F(x,y',l) < ||q_{k-1}\theta|| - (l-1)||q_k\theta||$ and therefore
\begin{equation}\label{ineq_case22}
\sup_{\theta\in \bounded_B }\;\sup_{k\in \NN}\; \max_{q_k+q_{k-1}\le
N<q_{k+1}} N\cdot H(\theta,N) =
1+\frac{a^2+3a+2}{\sqrt{4a^2+12a+5}}.
\end{equation}

By careful computations one can notice that the right-hand side
of~\eqref{ineq_case1} is always smaller (except the case $B=1$ when
Case 2 is impossible) than the corresponding right-hand sides
in~\eqref{ineq_case21} and~\eqref{ineq_case22}.  From this our result
now follows.
\end{proof}

\begin{corollary}
We have ${B \over 4} \leq f(B) \leq (1+\sqrt{4/5}) B$.
\end{corollary}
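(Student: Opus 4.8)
The plan is to work directly from the closed form for $f(B)$ given in Theorem~\ref{badthm}, since both inequalities then reduce to elementary estimates in the single parameter $a$. The two identities that make everything transparent are the factorizations $a^2+3a+2 = (a+1)(a+2)$ and $4a^2+12a+5 = (2a+1)(2a+5)$ appearing in the odd case, together with the observation that $1+\sqrt{4/5} = 1 + 2/\sqrt{5}$. With these in hand I would treat the even case $B=2a$ and the odd case $B=2a+1$ separately for each of the two bounds.

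For the lower bound $f(B)\ge B/4$ I would argue as follows. In the even case, dropping the leading $1$ and using $\sqrt{a^2+2a}<a+1$ gives $f(2a) > \frac{(a+1)^2}{2\sqrt{a^2+2a}} > \frac{(a+1)^2}{2(a+1)} = \frac{a+1}{2} > \frac{a}{2} = \frac{B}{4}$. In the odd case, using $\sqrt{(2a+1)(2a+5)}<2a+3$ yields $f(2a+1) > 1 + \frac{(a+1)(a+2)}{2a+3}$, and a single cross-multiplication shows $\frac{(a+1)(a+2)}{2a+3}\ge \frac{2a-3}{4}$ for every $a\ge 0$ (the resulting inequality $4a^2+12a+8\ge 4a^2-9$ being trivial), so $f(2a+1) > 1+\frac{2a-3}{4} = \frac{2a+1}{4} = \frac{B}{4}$.

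For the upper bound I would first record that it is attained exactly at $B=1$: there $a=0$ and $f(1) = 1+2/\sqrt{5} = (1+\sqrt{4/5})\cdot 1$. This is the one value that must be handled by an exact computation rather than an estimate. For all remaining $B$ I would establish the crude linear majorant $f(B) < \frac{B}{4} + 2$: in the even case from $\sqrt{a^2+2a}<a+1$ together with $\frac{1}{2\sqrt{a^2+2a}}\le \frac{1}{2\sqrt{3}}$, and in the odd case from $\sqrt{(2a+1)(2a+5)}>2(a+1)$, which gives $f(2a+1) < 1 + \frac{a+2}{2} < \frac{B}{4} + 2$. It then remains only to check the elementary inequality $\frac{B}{4} + 2 \le (1+\sqrt{4/5})B$, which holds precisely for $B\ge 2$; combined with the exact value at $B=1$, this completes the bound.

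The main obstacle is thus not any hard estimate but two points of bookkeeping. First, the inequality directions must be kept consistent: the same approximation $\sqrt{a^2+2a}\approx a+1$ is used with opposite rounding for the lower and upper bounds. Second, and more essential, the extremal case $B=1$ must be isolated, because the upper-bound constant $1+\sqrt{4/5}$ is achieved exactly there; indeed the crude majorant $\frac{B}{4}+2$ already exceeds $(1+\sqrt{4/5})B$ at $B=1$, so a single uniform estimate cannot establish the bound and the value $f(1)$ must be computed directly.
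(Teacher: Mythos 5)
Your proof is correct, and at bottom it follows the same strategy as the paper's: both arguments take the closed form \eqref{eq_fd} from Theorem~\ref{badthm} and reduce the two inequalities to elementary square-root estimates in the single parameter $a$. The difference is in how the upper bound is organized. The paper produces calibrated linear bounds, e.g.\ for odd $B$ it shows $1+\frac{B/2+1}{2}\le f(B)\le 1+\frac{B/2+1}{2}\cdot\frac{8}{3\sqrt{5}}$, where the constant $\frac{8}{3\sqrt{5}}$ is chosen precisely so that the majorant still sits below $(1+\sqrt{4/5})B$ at the extremal value $B=1$; the final comparison is then left as ``easy to check.'' You instead isolate $B=1$ --- correctly noting that $f(1)=1+2/\sqrt{5}=(1+\sqrt{4/5})\cdot 1$ is an exact equality, so no estimate with slack can work there --- and for $B\ge 2$ you use the cruder uniform majorant $f(B)<\frac{B}{4}+2$, closed by a trivial linear comparison. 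Your version makes the extremal structure explicit (the paper hides the tightness at $B=1$ inside its calibrated constant and its unproved final check), while the paper's version yields sharper two-sided linear bounds on $f(B)$ than $B/4$ and $B/4+2$. All your individual estimates are sound: the lower bounds follow from $\sqrt{a^2+2a}<a+1$ and $\sqrt{(2a+1)(2a+5)}<2a+3$ by cross-multiplication; for the even-case majorant your two stated ingredients combine via the split $\frac{(a+1)^2}{2\sqrt{a^2+2a}}=\frac{\sqrt{a^2+2a}}{2}+\frac{1}{2\sqrt{a^2+2a}}\le \frac{a+1}{2}+\frac{1}{2\sqrt{3}}$ (valid since $a\ge 1$ when $B$ is even); the odd case follows from $\sqrt{4a^2+12a+5}>2(a+1)$; and $\frac{B}{4}+2\le(1+\sqrt{4/5})B$ indeed holds exactly for integers $B\ge 2$.
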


\begin{proof}
This is an implication of the following estimates. For $B = 2a$,
from~\eqref{eq_fd} we have
$$
\frac{B}{4}\le 1+\frac{a+1}{2}\le 1+\frac{(a+1)^2}{2\sqrt{(a+1)^2-1}}\le 1+\frac{a+1}{2}\cdot\frac2{\sqrt{3}}<(1+\sqrt{4/5})B
$$
For $B = 2a+1$, we have
$$
1+\frac{B/2+1}{2}\le f(B) = 1+\frac{a^2+3a+2}{\sqrt{4a^2+12a+5}} = 1+\frac{(B/2+1)^2-1/4}{2\sqrt{(B/2+1)^2 - 1}} \le 1+\frac{B/2+1}{2}\cdot \frac{8}{3\sqrt{5}}
$$
Now it is easy to check that $f(B)$ is between $B/4$ and $(1+\sqrt{4/5})B$.
\end{proof}

The proof of Theorem~\ref{th4} also suggests a number $\theta\in\RR$ and a
sequence of positive integers $N_i$ such that the largest gaps in $G(\theta,
N_i)$ tend quickly to $f(B)/N_i$.

%This corollary does not seem to be true (Dzmitry)
%\begin{corollary}
%Let $B \geq 1$ be an integer and define $\theta = [0, B, 1, B, 1, B, 1,
%\ldots]$. Let $p_n/q_n$ be the $n$'th convergent to $\theta$. Then the
%largest gap $g$ corresponding to $N = \lfloor \frac{B+2}{2}\rfloor q_{2n+1} + q_{2n} - 1$ equals %$||q_{2n}\theta|| - \lfloor \frac{B-2}{2}\rfloor || q_{2n+1}\theta||$. Further,
%as $n \rightarrow \infty$ the quantity $N\cdot H(\theta,N)$ tends to $f(B)$.
%\label{maxgap}
%\end{corollary}

\begin{corollary}
Let $B \geq 1$ be an integer and define $\theta = [0, B, 1, B, 1, B, 1,
\ldots]$. Let $p_n/q_n$ be the $n$'th convergent to $\theta$. Then the
largest gap $g$ corresponding to $N =   q_{2n-1} + \lfloor \frac{B+2}{2} \rfloor q_{2n} - 2$ equals $||q_{2n-1}\theta|| - \lfloor \frac{B-2}{2}\rfloor || q_{2n}\theta||$. Further,
as $n \rightarrow \infty$ the quantity $N\cdot H(\theta,N)$ tends to $f(B)$.
\label{maxgap}
\end{corollary}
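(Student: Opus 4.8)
The plan is to apply Van Ravenstein's theorem with $k=2n$ and $l=\lfloor B/2\rfloor$, and then to extract the limit from the $x,y$-computation already carried out in the proof of Theorem~\ref{badthm}. First I would record the shape of the continued fraction: here $a_{2i-1}=B$ and $a_{2i}=1$ for all $i\ge 1$, so $\theta=\min\bounded_B$. Writing $l=\lfloor B/2\rfloor$, one has $\lfloor (B+2)/2\rfloor=l+1$ and $\lfloor (B-2)/2\rfloor=l-1$, so the proposed index is $N=(l+1)q_{2n}+q_{2n-1}-2$. For $B\ge 2$ we have $1\le l<B=a_{2n+1}$, and since $q_{2n}\ge q_2=B+1\ge 2$ the chain $lq_{2n}+q_{2n-1}\le N<(l+1)q_{2n}+q_{2n-1}$ holds; thus $N$ falls into the ``Case~2'' alternative of Van Ravenstein's theorem at index $k=2n$ with parameter $l$. (When $B=1$ we have $l=0$ and the same $N$ lands in the Case~1 alternative; the formula below with $\lfloor(B-2)/2\rfloor=-1$ then reads $||q_{2n-1}\theta||+||q_{2n}\theta||$, the largest Case~1 gap, and everything proceeds identically.) In this range the three candidate gap lengths are $||q_{2n}\theta||$, the quantity $C:=||q_{2n-1}\theta||-(l-1)||q_{2n}\theta||$, and $||q_{2n-1}\theta||-l||q_{2n}\theta||$; since $C$ is the sum of the other two it is the largest. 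Because $l-1=\lfloor(B-2)/2\rfloor$, this $C$ is exactly the length named in the statement.

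The crux is to show that $C$ is \emph{attained}, i.e.\ that $C\in G(\theta,N)$ rather than merely $C=\max\{\text{candidates}\}$, since Van Ravenstein's theorem provides only the inclusion $G(\theta,N)\subseteq\{\dots\}$. For this I would invoke the refinement of the three-gap theorem that also counts multiplicities: as $N$ increases by one within the block, the newly inserted point $\{N\theta\}$ subdivides exactly one gap of the current largest length $C$ into the two smaller lengths, so the number of gaps equal to $C$ is precisely $(l+1)q_{2n}+q_{2n-1}-1-N$. At $N=(l+1)q_{2n}+q_{2n-1}-2$ this count equals $1$; hence a gap of length $C$ is genuinely present and $H(\theta,N)=C$, which is the first assertion. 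I expect this multiplicity bookkeeping --- checking that the longest length survives at the penultimate value of $N$ in the block but vanishes at the last one --- to be the main obstacle, as it is the only ingredient not already packaged in the inclusion form of Van Ravenstein's theorem.

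For the limit I would reuse the substitution $x=a_{k+1}+\theta_{k+2}$, $y=\phi_k$ from the proof of Theorem~\ref{badthm}, now with $k=2n$. Because the tail of $\theta$ from position $2n+2$ onward is always $1,B,1,B,\ldots$, the value $\theta_{2n+2}=[0;\overline{1,B}]$ is independent of $n$, so that
$$
x=B+[0;\overline{1,B}]=\frac{\sqrt{B^2+4B}+B}{2}=\max\bounded_B
$$
is constant, while $y=\phi_{2n}=[0;1,B,1,\ldots,B]$ converges to $[0;\overline{1,B}]=\frac{\sqrt{B^2+4B}-B}{2}=y'$ as $n\to\infty$. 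The proof of Theorem~\ref{badthm} supplies the identity $\bigl((l+1)q_{2n}+q_{2n-1}\bigr)\,C=F(x,y)$, whence
$$
N\cdot H(\theta,N)=N\cdot C=F(x,y)-2C.
$$
Now $C=||q_{2n-1}\theta||-(l-1)||q_{2n}\theta||\to 0$, while $F(x,\cdot)$ (a ratio with strictly positive denominator $x+y$) is continuous near $y'$ and $y\to y'$; therefore $N\cdot H(\theta,N)\to F(x,y')$. By Cases~2.1 and~2.2 of the proof of Theorem~\ref{badthm} (and by Case~1 when $B=1$) this limit equals $f(B)$, which completes the argument.
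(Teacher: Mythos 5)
Your proposal is correct, and it supplies precisely what the paper leaves implicit: the paper offers no proof of Corollary~\ref{maxgap} at all, merely remarking that it is ``suggested by'' the proof of Theorem~\ref{th4}, and your argument is the natural completion of that suggestion. Your parameter bookkeeping checks out: for $\theta=[0;\overline{B,1}]$ one has $a_{2n+1}=B$, $x=a_{2n+1}+\theta_{2n+2}=\frac{B+\sqrt{B^2+4B}}{2}$ exactly (independent of $n$), and $y=\phi_{2n}\to y'=\frac{\sqrt{B^2+4B}-B}{2}$, which are exactly the extremal values of $x$ and $y'$ identified in Case~2 of the paper's proof; with $l=\lfloor B/2\rfloor$ this matches the paper's optimal choice of $l$ in both parities (the paper's $l=(B-1)/2$ for odd $B$, noting $F(x,y',(B+1)/2)=F(x,y',(B-1)/2)$). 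The one ingredient genuinely not available from the paper as written is the attainment of the largest length, since the quoted form of van Ravenstein's theorem is only an inclusion $G(\theta,N)\subseteq\{\cdots\}$; you correctly identify this as the crux and close it with the multiplicity refinement of the three-gap theorem (due to Slater, and present in van Ravenstein's paper), under which the number of gaps of the longest length $C$ in the block $lq_k+q_{k-1}\le N<(l+1)q_k+q_{k-1}$ is $(l+1)q_k+q_{k-1}-1-N$, hence equal to $1$ at the penultimate index $N=(l+1)q_k+q_{k-1}-2$. The limit computation via $N\cdot C = F(x,y)-2C$ with $C\to 0$ and continuity of $F(x,\cdot)$ is exact (the identity $((l+1)q_k+q_{k-1})C=F(x,y)$ holds precisely for this $\theta$), and your separate treatment of $B=1$ through Case~1 (where $\lfloor(B-2)/2\rfloor=-1$ turns the stated gap into $\|q_{2n-1}\theta\|+\|q_{2n}\theta\|$) is also right, since $f(1)$ is indeed the Case~1 value $1+2/\sqrt{5}$.
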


\section{Kronecker's theorem for badly approximable numbers}
\label{kron-sec}

Equipped with Theorem~\ref{th4} we can improve Theorem~17
from~\cite{Shallit:1996} (see Theorem~\ref{th1} above).
\begin{theorem}
Let $\theta$ be an irrational real number with partial quotients bounded by
$B$, and let $\beta$ be an arbitrary real number.   Suppose $0 \leq \theta,
\beta < 1$. Then there are integers $n, p$ with $n, |p| \leq N  $ such that
$|n \theta - p - \beta| < {{f(B)} \over 2N}$. \label{newkron}
\end{theorem}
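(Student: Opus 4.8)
The plan is to apply the three-gap theorem in the sharp form given by Theorem~\ref{th4}. Fix $N\ge 1$ and sort the $N+2$ numbers $0, \{\theta\}, \{2\theta\},\ldots,\{N\theta\}, 1$ as $0=s_0<s_1<\cdots<s_N<s_{N+1}=1$ (these are genuinely distinct since $\theta$ is irrational). By the defining property of $f(B)$, every gap satisfies $s_{i+1}-s_i\le H(\theta,N)\le f(B)/N$. I would first upgrade this to a \emph{strict} inequality $N\cdot H(\theta,N)<f(B)$ for any fixed $\theta\in\bounded_B$ and any finite $N$. This is exactly what the correction estimates in the proof of Theorem~\ref{th4} deliver: the extremal value $f(B)$ is attained only in a limiting configuration in which both $x=a_{k+1}+\theta_{k+2}$ and $y=\phi_k$ take their extreme values in $\bounded_B$, whereas for a concrete $\theta$ the quantity $y=q_{k-1}/q_k$ is rational and hence lies strictly below the optimal irrational value, forcing the gap length strictly below $f(B)/N$.

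Next I would locate $\beta$. Since $0\le\beta<1$, there is a unique index $i$ with $\beta\in[s_i,s_{i+1})$, and the nearer of the two endpoints $s_i,s_{i+1}$ lies within distance $\frac12(s_{i+1}-s_i)<f(B)/(2N)$ of $\beta$. It then remains to realize that endpoint as $n\theta-p$ with admissible $n,p$. If the nearer endpoint is $s_j=\{m\theta\}$ for some $1\le m\le N$, then $0<\theta<1$ gives $m\theta\in(0,N)$, so $p:=\lfloor m\theta\rfloor$ satisfies $0\le p\le N-1$ and $\{m\theta\}=m\theta-p$; take $n=m$. If it is $s_0=0$, take $n=p=0$.

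The one point requiring care is when the nearer endpoint is $s_{N+1}=1$, which is not itself of the form $\{n\theta\}$. Here I would invoke the congruence $1\equiv 0\pmod 1$ and set $n=0$, $p=-1$, so that $n\theta-p=1$ and $|n\theta-p-\beta|=1-\beta$ is precisely the distance from $\beta$ to $s_{N+1}$. In every case one obtains integers with $0\le n\le N$ and $|p|\le N$ for which $|n\theta-p-\beta|$ equals the distance from $\beta$ to the chosen grid point, and hence is $<f(B)/(2N)$.

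The main obstacle is the strict inequality $N\cdot H(\theta,N)<f(B)$. The non-strict bound is immediate from the definition of $f(B)$, but strictness is genuinely needed, since a priori $\beta$ could sit exactly at the midpoint of a largest gap; I expect to extract it from the rationality of $y=q_{k-1}/q_k$ as noted above, so that the finite-$N$ value never meets the limiting supremum. A secondary, purely bookkeeping difficulty is the boundary treatment of the endpoint $s_{N+1}=1$ and the verification that $p=\lfloor m\theta\rfloor$ never exceeds $N$ in absolute value, both of which are handled by the range $m\theta\in(0,N)$.
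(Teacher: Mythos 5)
Your argument follows, in structure, exactly the paper's proof of Theorem~\ref{newkron}: partition $[0,1)$ by the $N+2$ points $0,\{\theta\},\ldots,\{N\theta\},1$; locate $\beta$ in its gap; bound its distance to the nearer endpoint by half the largest gap, which is at most $f(B)/(2N)$ by Theorem~\ref{th4}; and realize that endpoint as $n\theta-p$ with $0\le n\le N$ and $|p|\le N$, using the same device $n=0$, $p=-1$ when the nearer endpoint is $1$. On this main line the two proofs coincide, and your endpoint bookkeeping is fine.

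Where you go beyond the paper is the strictness claim $N\cdot H(\theta,N)<f(B)$, and there your proposed mechanism is wrong even though the conclusion is right. You argue that $y=\phi_k=q_{k-1}/q_k$, being rational, ``lies strictly below the optimal irrational value'' $y'=\frac{\sqrt{B^2+4B}-B}{2}$. It need not: finite continued fractions with partial quotients bounded by $B$ oscillate around the infinite one and can exceed $y'$ (already $\phi_1=[0;1]=1>y'$ when $a_1=1$), and the paper states explicitly that $y$ ``can be slightly bigger than $y'$.'' The strictness actually delivered by the proof of Theorem~\ref{th4} comes from the opposite direction: the possible excess of $\frac{xy+1}{x+y}$ over $\frac{xy'+1}{x+y'}$ is bounded (via $y-y'<q_k^{-2}$) by a quantity strictly smaller than one gap length --- $||q_k\theta||+||q_{k-1}\theta||$ in Case~1, and $||q_{k-1}\theta||-(l-1)||q_k\theta||$ in Case~2 --- and that loss is exactly absorbed by the $-1$ in the bounds $N\le q_k+q_{k-1}-1$, resp.\ $N\le (l+1)q_k+q_{k-1}-1$. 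With that substitution your proof closes. It is worth noting that you have caught a real imprecision: the paper's own proof concludes only with the non-strict bound $|n\theta-p-\beta|\le f(B)/(2N)$ although the statement asserts a strict inequality, and the midpoint scenario you worry about (where $\beta$ sits exactly at the center of a largest gap) is indeed ruled out only by the strict bound on $N\cdot H(\theta,N)$ just described.
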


\begin{proof}
Apply the three-gap theorem to $\theta$ and $N$.  This involves
sorting the
$N+2$ points $0, \{ \theta \}, \{ 2\theta \}, \ldots, \{ N \theta \}, 1$
in ascending order and creating the $N+1$ intervals between the points,
the union of which forms $[0,1)$.

Now consider the interval in which $\beta$ lies, denote it by $[e_1,e_2]$
with $e_1 = \{n_1\theta\}$ and either $e_2=\{n_2\theta\}$ or $e_2=1$. The
distance from $\beta$ to the closest of these two endpoints is at most
$\frac{e_2-e_1}{2}$, which by Theorem~\ref{badthm} is bounded above by
$f(B)/2N$.

If the closest endpoint to $\beta$ is of the form $\{n\theta\}$ then we
notice that $\{n\theta\} = n\theta - p$ for some integer $p$ with $0 \leq p <
n$. Therefore $|n\theta - p -\beta| \le \frac{f(B)}{2N}$ as required. If 1 is
the closest endpoint to $\beta$ then we take $n=0, p=-1$ and verify $|0\cdot
\theta - 1 - \beta| \le \frac{f(B)}{2N}$.
\end{proof}

\begin{remark}  This improves the bound $(B+2)N^2$ given in
\cite{Shallit:1996}.
\end{remark}

\begin{remark}  The bound in the theorem is tight. We can choose $\theta$ and $N$
from Corollary~\ref{maxgap} and  choose $\beta$ arbitrarily close to the
midpoint of the maximal gap.  By choosing $N$ large enough, we can ensure
that the corresponding maximal gap will be as close as we like to $f(B)/N$. 
\end{remark}

\section{Diversity}
\label{diversity-sec}

We now turn to the application of these results that concerned us
in \cite{Shallit:1996}.

Let ${\bf s} = (s_n)_{n \geq 0}$ be a sequence.  We say that
$\bf s$ is {\it diverse\/}
if, for all $k \geq 2$, the $k$ subsequences
$\{ (s_{ki+a})_{i \geq 0} \ : \ 0 \leq a < k \}$
are all distinct.

If two sequences ${\bf t} = (t_i)_{i \geq 0}$ and
${\bf u} = (u_i)_{i \geq 0}$ are distinct, we define their {\it agreement\/}
$\ag({\bf t},{\bf u})$ to be $\min \{ i \ : \ t_i \not= u_i \}$.
If a sequence $(s_n)_{n \geq 0}$ has the property that
for all $r, a, b$ with $0 \leq a < b < r$ we have
$\ag( (s_{ri+a})_{i \geq 0}, (s_{ri+b})_{i \geq 0} ) \leq f(r)$,
then we say that the function $f$ is a {\it diversity measure\/} for
the sequence $\bf s$.  In \cite{Shallit:1996} it is shown that
there exists a function $f \in O(\log r)$ that
is a diversity measure for almost all binary sequences.  However,
no explicit example of a sequence with this diversity measure is known.

Thus,  it is of interest to produce an explicitly-defined sequence
with slowly-growing diversity measure.  In \cite{Shallit:1996} the
second author looked at the case of Sturmian characteristic sequences
(see, e.g., \cite{Berstel&Seebold:2002}); these are words of the form
${\bf s} = (s_i)_{i \geq 0}$ with
\begin{equation}
s_i = \lfloor (i+2) \theta \rfloor - \lfloor (i+1) \theta \rfloor .
\label{sturm}
\end{equation}
(The indexing here is slightly atypical because we want to index $\bf s$ starting at $0$ instead of the more conventional $1$.)

Suppose $\theta$ has partial quotients bounded by $B$.
Then the second
author proved \cite[Lemma 18]{Shallit:1996} that $4(B+2)^3 r^3$ is a
diversity measure for $(s_i)_{i \geq 0}$.  In this section we
improve this result.

\begin{theorem}
Suppose $0 < \theta < 1$ has partial quotients bounded by $B$.  Consider the
associated Sturmian characteristic sequence $\bf s$ as defined above. Then the
function $2 (B+2)^2 r^2$ is a diversity measure for $\bf s$. \label{newdiver}
\end{theorem}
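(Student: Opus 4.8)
The plan is to convert the statement into a question about how finely the points $\{ir\theta\}$ sample the circle $\RR/\ZZ$, and then feed that into Theorem~\ref{th4} applied to $r\theta$. First I would fix $r\ge 2$ and $0\le a<b<r$, set $c=b-a\in\{1,\dots,r-1\}$, and reformulate the agreement geometrically. Writing $I=[1-\theta,1)$ and using \eqref{sturm}, one checks that $s_n=1$ iff $\{(n+1)\theta\}\in I$; so with $x=\{ir\theta\}$ we get $s_{ri+a}=1$ iff $x\in I-(a+1)\theta \pmod 1$. Consequently $s_{ri+a}\neq s_{ri+b}$ exactly when $x$ lies in the symmetric difference $D$ of the two length-$\theta$ arcs $I-(a+1)\theta$ and $I-(b+1)\theta$, which are translates of one another by $c\theta$. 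The task is then to produce a small index $i$ with $\{ir\theta\}\in D$.

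Next I would bound $D$ from below. A short case analysis of two arcs of common length $\theta$ whose offset is $||c\theta||$ --- distinguishing $||c\theta||\le\theta$ from $||c\theta||>\theta$, and accounting for the wrap-around past $0$ that occurs when $\theta>1/2$ --- shows that $D$ always has a component arc of length at least $\min(||c\theta||,||\theta||)$. I would then invoke the standard bound for badly approximable numbers, namely $||m\theta||>\tfrac1{(B+2)m}$ for all $m\ge1$ (valid since $\theta\in\bounded_B$); applied with $m=c\le r-1$ and with $m=1$ this shows that $D$ contains a component arc strictly longer than $\tfrac1{(B+2)r}$.

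For the complementary upper bound on the sampling gaps I would apply Theorem~\ref{th4} to $\psi:=r\theta$, after bounding its partial quotients. If $q'_k$ denote the convergent denominators of $\psi$, then $||q'_k\psi||=||(rq'_k)\theta||\ge\tfrac1{(B+2)rq'_k}$, while $||q'_k\psi||<1/q'_{k+1}<1/(a'_{k+1}q'_k)$; comparing these forces every partial quotient $a'_{k+1}$ of $\psi$ to be smaller than $(B+2)r$, so $\psi\in\bounded_{B'}$ with $B'<(B+2)r$. By the property defining $f$ (stated just before Theorem~\ref{th4}), together with the preceding corollary's bound $f(B')<2B'$, the largest gap among the $M+1$ points $\{0\},\{\psi\},\dots,\{M\psi\}$ satisfies $H(\psi,M)\le f(B')/M<2(B+2)r/M$.

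Finally I would combine the two estimates. Choosing $M=2(B+2)^2r^2$ makes $H(\psi,M)<\tfrac1{(B+2)r}$, which is strictly smaller than the length of the largest component arc of $D$. Since the points $\{0\},\{\psi\},\dots,\{M\psi\}$ cut the circle into gaps each of length at most $H(\psi,M)$, that arc cannot lie inside a single gap and so must contain some $\{i\psi\}=\{ir\theta\}$ with $0\le i\le M$; for this $i$ we get $s_{ri+a}\neq s_{ri+b}$. Hence $\ag\big((s_{ri+a})_{i\ge0},(s_{ri+b})_{i\ge0}\big)\le M=2(B+2)^2r^2$ for every admissible $r,a,b$, which is exactly the assertion. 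I expect the main obstacle to be the bound on the partial quotients of $r\theta$ (and hence the control of $H(r\theta,M)$), together with keeping the arc-length estimate for $D$ honest in the wrap-around regime $\theta>1/2$; the remaining steps are routine bookkeeping.
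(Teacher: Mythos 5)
Your proposal is correct and takes essentially the same route as the paper: reduce the problem to finding $i$ with $\{ir\theta\}$ inside an arc of length exceeding $\tfrac{1}{(B+2)r}$, then use the bound $(B+2)r$ on the partial quotients of $r\theta$ together with Theorem~\ref{th4} and its corollary ($f(B')<2B'$) to guarantee such an $i\le 2(B+2)^2r^2$. The only differences are matters of self-containment, and they all check out: you work with the symmetric difference of the two ``value-$1$'' arcs rather than the paper's intersection $I_c\cap I_d$ (your arc $A\setminus B$ of length $\min(||c\theta||,||\theta||)$ is exactly that intersection), you inline the covering/midpoint argument that is precisely the paper's proof of Theorem~\ref{newkron} instead of invoking that theorem, and you prove directly (via $||q_k'\psi||$ estimates) the partial-quotient bound for $r\theta$ that the paper cites from Lagarias and Shallit.
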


\begin{proof}
We follow the proof of \cite[Lemma 18]{Shallit:1996} with some small
changes.

We use the ``circular representation'' for subsets of $[0,1)$, identifying
the endpoints $0$ and $1$ and considering each point modulo $1$.

Then $s_{i-1} = 1$ iff $\{ i \theta \} \in [1-\theta, 1)$.  If we can find
$m$ such that $\{ (rm+c) \theta \} \in [1-\theta, 1)$ while $\{ (rm+d) \theta
\} \in [0, 1-\theta)$, then for this $m$ we have $s_{rm+c-1} \not=
s_{rm+d-1}$.

But, using the circular representation of intervals,
$$\{ (rm+c) \theta \} \in [1-\theta, 1) \text{ iff } \{ rm\theta \} \in
I_c := [-(c+1)\theta, -c \theta)$$
and
$$\{ (rm+d) \theta \} \in [0,1-\theta) \text{ iff  } \{ rm\theta \} \in
I_d := [-d\theta, -(d+1)\theta).$$
Furthermore, $\mu(I_c) + \mu(I_d) = 1$ (where $\mu$ is Lebesgue
measure) and so these two intervals have nontrival intersection
if $c \not= d$.  The endpoints of these intervals are of the
form $\{ -i \theta \}$ for some $i$ with $0 \leq i \leq r$.
Then $\mu(I_c \ \cap \ I_d)$ is at least as big as the smallest
gap $g$ in the three-gap theorem corresponding to $N = r$, which
by \cite[Lemma 16]{Shallit:1996}, is at least $1 \over {(B+2)r}$.

Let $m'$ be the midpoint of the interval $I_c \ \cap \ I_d$.
If we could find integers $m, t$ with
$$ |rm \theta  - m' - t | < {1 \over {2 (B+2) r } },$$
then
$$ {1\over 2} \mu (I_c \ \cap \ I_d) \geq
{1 \over 2} g \geq {1 \over {2 (B+2) r }}  > |rm \theta -m' - t|,$$
then $\{ rm\theta \}$ would lie inside $I_c \ \cap \ I_d$.

Since $\theta$ has partial quotients bounded by $B$, we know that $r \theta$
has partial quotients bounded by $r(B+2)$; see, for example
\cite{Lagarias&Shallit:1997}.   Hence by Theorem~\ref{newkron} applied to
$r\theta$, we see that such an $m$ exists with $m \leq r (B+2) f(r(B+2)) \leq
2 (B+2)^2 r^2$.
\end{proof}

We now show that the bound in Theorem~\ref{newdiver} is tight, up to a
constant factor.

Let, as usual, the Fibonacci numbers $F_n$ be defined by
$F_0 = 0$, $F_1 = 1$, and $F_n = F_{n-1} + F_{n-2}$ for $n \geq 2$.
Let the Lucas numbers $L_n$ be defined by
$L_0 = 2$, $L_1 = 1$, and $L_n = L_{n-1} + L_{n-2}$ for $n \geq 2$.
Define $\gamma(n) = \{ n \theta \}$.

In what follows
we let 
\begin{align*}
\alpha &= (1+\sqrt{5})/2 \doteq 1.61803 \\
\beta &= (1-\sqrt{5})/2 \doteq -0.61803 
\end{align*}
be the two zeros of $X^2 - X - 1$, and we
let $\theta = 1/\alpha = -\beta = (\sqrt{5} -1)/2$.
Recall the Binet formulas for the Fibonacci and Lucas numbers:
$F_n = (\alpha^n - \beta^n)/\sqrt{5}$ and $L_n = \alpha^n + \beta^n$.

We start with two useful lemmas.  
\begin{lemma}
\leavevmode
\begin{enumerate}[(i)]
\item $ F_n \theta = F_{n-1} - \beta^n$ for $n \geq 1$;
\item $L_n \theta = L_{n-1} + \sqrt{5} \beta^n$ for $n \geq 1$.
\end{enumerate}
\label{fibfacts}
\end{lemma}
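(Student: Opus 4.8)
The plan is to prove both identities by substituting the Binet formulas and reducing each to a single algebraic relation satisfied by $\beta$. First I would record the elementary facts about the roots of $X^2 - X - 1$: namely $\alpha + \beta = 1$, $\alpha\beta = -1$, and $\beta^2 = \beta + 1$, where the first two are Vieta's formulas and the third is the defining equation. Since $\theta = -\beta$, multiplying $F_n = (\alpha^n - \beta^n)/\sqrt 5$ by $\theta$ gives $F_n\theta = (-\beta\alpha^n + \beta^{n+1})/\sqrt 5$, and rewriting $-\beta\alpha^n = \alpha^{n-1}$ via $\alpha\beta = -1$ yields $F_n\theta = (\alpha^{n-1} + \beta^{n+1})/\sqrt 5$.

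Part (i) then amounts to checking $(\alpha^{n-1} + \beta^{n+1})/\sqrt 5 = F_{n-1} - \beta^n$, which, after clearing $\sqrt 5$ and cancelling the $\alpha^{n-1}$ terms, is equivalent to $\beta^{n+1} + \sqrt 5\,\beta^n + \beta^{n-1} = 0$. Dividing through by $\beta^{n-1}$, this is the single relation $\beta^2 + \sqrt 5\,\beta + 1 = 0$, which I would verify directly using $\beta^2 = \beta + 1$ together with $(1+\sqrt 5)\beta = 2\alpha\beta = -2$. Applying the same computation to $L_n = \alpha^n + \beta^n$ gives $L_n\theta = \alpha^{n-1} - \beta^{n+1}$, and part (ii) reduces to the identical relation $\beta^2 + \sqrt 5\,\beta + 1 = 0$; hence both parts follow simultaneously from one verified identity.

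An alternative route that avoids $\sqrt 5$ arithmetic is induction on $n$, exploiting that the Fibonacci and Lucas numbers obey the same recurrence. For (i), the base cases $n = 1, 2$ are immediate (the case $n = 2$ uses $\beta^2 = \beta + 1$), and the inductive step computes $F_{n+1}\theta = F_n\theta + F_{n-1}\theta = (F_{n-1} - \beta^n) + (F_{n-2} - \beta^{n-1}) = F_n - \beta^{n-1}(\beta+1) = F_n - \beta^{n+1}$, the final equality being precisely $\beta^2 = \beta + 1$; part (ii) is handled identically. I do not expect a genuine obstacle here: the only point requiring care is the bookkeeping of signs and exponents when invoking $\alpha\beta = -1$, since in every route the whole content collapses to the quadratic relation for $\beta$.
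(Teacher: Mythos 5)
Your proposal is correct and matches the paper's approach: the paper's proof is simply ``Routine manipulation involving the Binet formulas,'' and your first route (substituting $F_n = (\alpha^n-\beta^n)/\sqrt5$, $L_n = \alpha^n+\beta^n$, $\theta = -\beta$, and reducing everything via $\alpha\beta=-1$ to the single relation $\beta^2+\sqrt5\,\beta+1=0$) is exactly that manipulation, carried out explicitly and accurately. The inductive alternative you sketch is also valid but unnecessary; no gaps here.
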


\begin{proof}
Routine manipulation involving the Binet formulas.
\end{proof}

\begin{lemma}
Let $n \geq 2$.
Then 
\begin{align}
\gamma(F_{4n} i + L_{2n} j + F_{2n-1}  )
&= (\gamma(F_{4n})-1)i +  \gamma(L_{2n}) j + \gamma(F_{2n-1}), \quad \text{and}  \label{f1}\\
\gamma( F_{4n} i + L_{2n} j + L_{2n} ) 
&= (\gamma(F_{4n}) - 1)i +  \gamma(L_{2n}) j + \gamma(L_{2n} ) . \label{f2}
\end{align}
for $0 \leq i \leq L_{2n+1} - 2$ and
$0 \leq j \leq F_{2n} - 1$.
\end{lemma}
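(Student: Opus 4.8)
The plan is to turn each identity into an exact computation using the additivity of $\gamma(N)=\{N\theta\}$ modulo $1$ together with Lemma~\ref{fibfacts}. First I would record the values of $\gamma$ on the three indices occurring on the left-hand sides. Recalling that $\theta=-\beta$, so that $\beta^{2n}=\theta^{2n}$ and $\beta^{2n-1}=-\theta^{2n-1}$, the parity of each exponent pins down the sign of the correction term in Lemma~\ref{fibfacts}, and one obtains
$$\gamma(F_{4n}) = 1-\beta^{4n}, \qquad \gamma(L_{2n}) = \sqrt5\,\beta^{2n}, \qquad \gamma(F_{2n-1}) = -\beta^{2n-1},$$
where for $n\ge 2$ each of $\beta^{4n}$, $\sqrt5\,\beta^{2n}$, $-\beta^{2n-1}$ lies in $(0,1)$ (the middle one because $\sqrt5\,\theta^2<1$). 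In particular $\gamma(F_{4n})-1=-\beta^{4n}$, which accounts for the $-1$ in the coefficient of $i$.

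Next I would expand $(F_{4n}i+L_{2n}j+F_{2n-1})\theta$ directly via Lemma~\ref{fibfacts}. The terms $F_{4n-1}i+L_{2n-1}j+F_{2n-2}$ form an integer and drop out of $\gamma$, while the correction terms combine into exactly the proposed right-hand side
$$R(i,j) := -\beta^{4n}\,i+\sqrt5\,\beta^{2n}\,j-\beta^{2n-1},$$
with the analogous expression (final term $\sqrt5\,\beta^{2n}$ in place of $-\beta^{2n-1}$) for \eqref{f2}. Thus each identity is \emph{equivalent} to the single claim that $R(i,j)\in[0,1)$ throughout the rectangle $0\le i\le L_{2n+1}-2$, $0\le j\le F_{2n}-1$. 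Since $R$ is affine in $(i,j)$ with negative $i$-coefficient and positive $j$-coefficient, its maximum over the rectangle is attained at $(0,F_{2n}-1)$ and its minimum at $(L_{2n+1}-2,0)$, so only these two corners need to be checked.

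The corner values are dispatched by the Binet formulas. For the maximum of \eqref{f1}, the identity $\sqrt5\,\beta^{2n}F_{2n}=(\alpha\beta)^{2n}-\beta^{4n}=1-\beta^{4n}$ yields $R(0,F_{2n}-1)=1-\beta^{4n}-\sqrt5\,\beta^{2n}-\beta^{2n-1}$, and $R<1$ reduces, after dividing by $\theta^{2n-1}$, to $1<\theta^{2n+1}+\sqrt5\,\theta$, which holds since $\sqrt5\,\theta=(5-\sqrt5)/2>1$. For the minimum, using $L_{2n+1}=\alpha^{2n+1}+\beta^{2n+1}$ and $\alpha=\theta^{-1}$ gives $R(L_{2n+1}-2,0)=\theta^{2n-1}-\theta^{4n}(L_{2n+1}-2)$, and $R\ge 0$ collapses to $\theta^{4n+2}+2\theta^{2n+1}\ge 0$. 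For \eqref{f2} the shape is identical: the maximum equals $\sqrt5\,\beta^{2n}F_{2n}=1-\beta^{4n}<1$, and the minimum being $\ge 0$ reduces to $\sqrt5-\alpha+\theta^{4n+1}+2\theta^{2n}\ge 0$, which holds because $\sqrt5-\alpha=\theta>0$.

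I expect no genuine obstacle: the entire argument is an exact evaluation, and the only care required is the bookkeeping of signs forced by the parities of $4n$, $2n$, and $2n-1$. The one point worth stating cleanly is \emph{why} the stated ranges are precisely the right ones, namely that the two corner values approach $1-\beta^{4n}$ (just below $1$) and $\theta^{2n-1}$ (just above $0$); enlarging either range by one would push the relevant corner outside $[0,1)$, so the rectangle is exactly the largest one on which $\gamma$ splits additively with no reduction modulo $1$.
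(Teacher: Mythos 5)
Your proof is correct and is essentially the paper's argument: both reduce each identity to the claim that the affine expression $R(i,j)$ lies in $[0,1)$ (no wrap-around modulo $1$), exploit monotonicity in $i$ and $j$ (the paper phrases this as an ascending traversal of the arrays $A$ and $B$ with positive increments $d$, $d'$, $d''$; you phrase it as corner extremality of an affine function, which is the same substance), and verify the two extreme corners, which come out to $2\theta^{4n}+\theta^{6n+1}>0$ and $1-\theta^{4n}<1$ exactly as in the paper.

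One caveat on your closing remark: the claim that the stated rectangle is \emph{exactly} the largest one on which $\gamma$ splits additively is false in the $i$ direction. Since $\theta^{4n}L_{2n+1}=\theta^{2n-1}-\theta^{6n+1}$, taking $i=L_{2n+1}-1$ or even $i=L_{2n+1}$ still gives $R(i,0)=\theta^{4n}+\theta^{6n+1}>0$, resp.\ $\theta^{6n+1}>0$, for \eqref{f1}, and similarly the minimum for \eqref{f2} stays positive because $\sqrt{5}\,\theta>1$; so both identities persist for two more values of $i$. The bound $i\le L_{2n+1}-2$ is dictated by the counting argument in the theorem that follows (so that $k=F_{2n}i+j$ runs exactly over $0\le k\le F_{4n+1}-F_{2n}-2$ in mixed radix), not by failure of the identity. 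The maximality assertion is correct only in the $j$ direction. This does not affect the validity of your proof of the lemma itself.
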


\begin{proof}
By Lemma~\ref{fibfacts} we get
\begin{align*}
\gamma(F_{2n-1}) &= \theta^{2n-1} \\
\gamma(F_{4n}) &= 1- \theta^{4n} \\
\gamma(L_{2n}) &= \sqrt{5} \theta^{2n},
\end{align*}
which we use repeatedly in what follows.
Then the desired relations \eqref{f1} and \eqref{f2} hold provided there
is no ``wrap-around'' modulo $1$ in those sums.   

We now define two
rectangular arrays $A$ and $B$, as follows:
\begin{align*}
A[i,j] &= (\gamma(F_{4n})-1)i +  \gamma(L_{2n}) j + \gamma(F_{2n-1})  \quad
\text{for
$0 \leq i \leq L_{2n+1}-2$ and $0 \leq j \leq F_{2n} - 1$} \\
B[i,j] &= (\gamma(F_{4n}) - 1)i +  \gamma(L_{2n}) j + \gamma(L_{2n}) 
\quad \text{for 
$0 \leq i \leq L_{2n+1}-2$ and $0 \leq j \leq F_{2n} - 1$} .
\end{align*}
Note that $B[i,j] - A[i,j] = \gamma(L_{2n}) - \gamma(F_{2n-1}) = \theta^{2n+1}$,
which is independent of $i$ and $j$.

We now claim that the entries of $A$ (resp., $B$) can be read in ascending
order as follows:  start at the bottom left (i.e., at
the entry $A[L_{2n+1}-2,0]$, resp., $B[L_{2n+1}-2,0]$).
Proceed up each column to the
top entry.   When you reach the top of a column, continue at the bottom of
the column to its right.  This order is illustrated for $n = 2$ and $A$
in Figure~\ref{fig1}.  As shown in Figure~\ref{fig1}, let $d$ denote the
difference between two entries in the same row, but adjacent 
columns, and let $d'$ denote the difference between two entries
in the same column, but adjacent rows.
Finally, let $d''$ denote the difference between the entry at the
top of a column, and the entry at the bottom of the column to its
right.  
Then it is easy
to verify from the definitions of $A$ and $B$ that
\begin{align*}
d &= \gamma(L_{2n}) = \sqrt{5} \theta^{2n} ; \\
d' &= 1-\gamma(F_{4n}) = \theta^{4n} ; \\
d'' & =  (L_{2n+1} - 2) \gamma(F_{4n}) + \gamma(L_{2n}) =
        \theta^{2n+1} + 2 \theta^{4n} + \theta^{6n+1} .
\end{align*}
\begin{figure}[H]
\begin{center}
\includegraphics[width=6in]{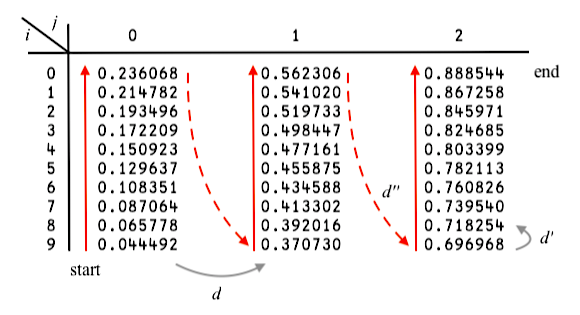}
\end{center}
\caption{The array $A[i,j]$ for $n = 2$}
\label{fig1}
\end{figure}

Finally, we observe that for both 
arrays, every entry is strictly between $0$ and $1$.
By the increasing property of columns and rows,
it suffices to verify this for the entries labeled ``start''
and ``end''.
To see this, note that the entry labeled ``start'' in
$A$ is 
$$A[L_{2n+1} - 2, 0] =
(\gamma(F_{4n}) - 1) (L_{2n+1} - 2) + \gamma(F_{2n-1}) = 2 \theta^{4n} + \theta^{6n+1} > 0,$$
while the entry labeled ``end'' in
$B$ is 
$$B[0, F_{2n} -1] = \gamma(L_{2n}) (F_{2n} - 1) + \gamma(L_{2n}) 
= 1 - \theta^{4n} < 1.$$
Since $A[i,j]<B[i,j]$, the other two entries $A[0, F_{2n}-1]$ and $B[L_{2n+1}-2,0]$ are also between 0 and 1.

Putting all these facts together, we see there is no
``wrap-around'' modulo $1$ in \eqref{f1} and \eqref{f2}, so that
\begin{align*}
A[i,j] &= \gamma(F_{4n} i + L_{2n} j + F_{2n-1}) \\
B[i,j] &= \gamma(F_{4n} i + L_{2n} j + L_{2n})
\end{align*}
for $0 \leq i \leq L_{2n+1}-2$ and
$0 \leq j \leq F_{2n} - 1$.
\end{proof}

We can now prove our lower bound on diversity.
\begin{theorem}
Let $n\geq 2$ be an integer, and let $(s_i)_{i \geq 0}$ be the
Sturmian sequence, defined in \eqref{sturm}, and corresponding to $\theta = {1 \over 2}(\sqrt{5}-1)$.
Let $r = L_{2n}$, $a = F_{2n-1}-1$,
and $b = L_{2n}-1$ for $n \geq 2$.
Then
\begin{align*}
s_{rk + a } &= s_{rk+ b} \text{
for $0 \leq k \leq F_{4n+1} - F_{2n+1}-2$, but } \\
0 = s_{rk+a } &\not= s_{rk+b} = 1 \text{
for $k = F_{4n+1} - F_{2n+1}- 1$} .
\end{align*}
Since $F_{4n+1}/L_{2n}^2 \approx (\sqrt{5} + 10)/10$, this implies
that $\bf s$ has diversity measure $\Omega(r^2)$.
\end{theorem}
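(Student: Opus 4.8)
The plan is to reduce the agreement or disagreement of $s_{rk+a}$ and $s_{rk+b}$ to the question of where a single orbit point falls, and then to read the answer off the array $A$ of the preceding lemma. Recall that $s_j = 1$ exactly when $\{(j+1)\theta\} \in [1-\theta,1)$. Since $a+1 = F_{2n-1}$ and $b+1 = L_{2n}$, and since $F_{4n} = F_{2n}L_{2n}$ gives $rk = L_{2n}k = F_{4n}i + L_{2n}j$ whenever $k = F_{2n}i+j$, the preceding lemma identifies
$$ \{(rk+F_{2n-1})\theta\} = A[i,j], \qquad \{(rk+L_{2n})\theta\} = B[i,j] = A[i,j] + \theta^{2n+1}, $$
with both entries lying in $(0,1)$ (no wrap-around) throughout the grid $0 \le i \le L_{2n+1}-2$, $0\le j \le F_{2n}-1$. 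Using $1-\theta = \theta^2$, I get $s_{rk+a}=1 \iff A[i,j] \ge \theta^2$ and $s_{rk+b}=1 \iff A[i,j]+\theta^{2n+1} \ge \theta^2$. Because $A[i,j] < B[i,j]$, the two symbols disagree precisely when $A[i,j]$ lands in the window $W := [\theta^2 - \theta^{2n+1},\, \theta^2)$, and in that case $s_{rk+a}=0$, $s_{rk+b}=1$. So everything reduces to counting the grid entries $A[i,j]$ that fall in $W$.

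The core claim I would prove is that $W$ contains \emph{exactly one} grid entry, the one indexed by $k^* = F_{4n+1}-F_{2n+1}-1$. First I would locate $k^*$ in the array: the identities $L_{2n+1}F_{2n} = F_{4n+1}-1$ and $F_{2n}-F_{2n-1}=F_{2n-2}$ give $k^* = (L_{2n+1}-2)F_{2n}+F_{2n-2}$, so $k^*$ is the entry $v^* := A[L_{2n+1}-2, F_{2n-2}]$, the bottom entry of column $F_{2n-2}$. A short Binet computation (via $rk^*+F_{2n-1} = F_{6n+1}-F_{4n+1}-1$) then evaluates it to
$$ v^* = \theta^2 - \theta^{4n+1} + \theta^{6n+1}, $$
which visibly lies in $W$. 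Next I invoke the ascending reading order of the lemma, in which consecutive entries differ by $d' = \theta^{4n}$ inside a column and by $d'' = \theta^{2n+1}+2\theta^{4n}+\theta^{6n+1}$ at a column boundary. Since $v^*$ is a column-bottom, its predecessor in this order is $v^*-d''$ and its successor is $v^*+d'$, and
$$ v^* - d'' = \theta^2 - \theta^{2n+1} - \theta^{4n+1} - 2\theta^{4n} < \theta^2 - \theta^{2n+1}, \qquad v^* + d' = \theta^2 + \theta^{4n+2} + \theta^{6n+1} \ge \theta^2. $$
By monotonicity of the reading order, every entry at or before the predecessor is $<\theta^2-\theta^{2n+1}$ (below $W$) and every entry at or after the successor is $\ge\theta^2$ (above $W$); hence $v^*$ is the unique grid entry in $W$.

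With the core claim in hand the theorem follows at once: over the whole grid the only disagreement occurs at $k=k^*$, so $s_{rk+a}=s_{rk+b}$ for $0 \le k \le k^*-1$, while at $k=k^*$ we have $v^*<\theta^2 \le v^*+\theta^{2n+1}$, giving $s_{rk+a}=0 \ne 1 = s_{rk+b}$. For the diversity consequence this says $\ag\big((s_{rk+a})_k,(s_{rk+b})_k\big)=k^*$ for the admissible triple $r=L_{2n}$, $a=F_{2n-1}-1<b=L_{2n}-1<r$; since $k^* = F_{4n+1}-F_{2n+1}-1 = \Theta(\alpha^{4n})$ while $r^2 = L_{2n}^2 = \Theta(\alpha^{4n})$, the ratio $k^*/r^2$ stays bounded away from $0$, so any diversity measure must be $\Omega(r^2)$.

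The routine parts are the reduction in the first paragraph and the final diversity bookkeeping. The main obstacle is the uniqueness in the second paragraph: it hinges on pinning down the exact value $v^* = \theta^2-\theta^{4n+1}+\theta^{6n+1}$ and on exploiting that the lemma's reading order has small intra-column gaps $\theta^{4n}$ punctuated by the larger boundary gaps $d''\approx\theta^{2n+1}$, so that the length-$\theta^{2n+1}$ window $W$ is threaded exactly between the top of column $F_{2n-2}-1$ and the second entry of column $F_{2n-2}$. Making the four power-of-$\theta$ inequalities line up is where the golden-ratio identities ($1-\theta=\theta^2$, $F_{2m}=F_mL_m$, and $L_mF_k = F_{m+k}-(-1)^kF_{k-m}$) carry the argument.
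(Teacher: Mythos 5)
Your proposal is correct and follows essentially the same route as the paper: the same reduction via the array lemma, the same window condition (the paper phrases it as $A[i,j] < 1-\theta < B[i,j]$), the same special entry $(i,j) = (L_{2n+1}-2, F_{2n-2})$ — your value $\theta^2 - \theta^{4n+1} + \theta^{6n+1}$ equals the paper's $\theta^2 + 2\theta^{4n} + \theta^{6n+1} - \theta^{4n-2}$ since $2\theta^{4n} - \theta^{4n-2} = -\theta^{4n+1}$ — and the same gap-based uniqueness argument using $d'$, $d''$, which you merely articulate more explicitly as a predecessor/successor comparison in the ascending reading order.
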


\begin{proof}
From the definition of $s_n$, we have 
that $s_n = 0$ iff $\gamma(n+1) \in (1-\theta, 1)$.
Thus, for the choices of $r, a, b$ above, it suffices to find
the smallest $k\geq 0$ such that $\gamma(rk+a+1) \not\in (1-\theta,1)$, but
$\gamma(rk+b+1) \in (1-\theta,1)$ (or vice versa).

Let's look again at the arrays $A$ and $B$.   
We claim that $A$ contains each value
$\gamma(rk+a+1)$ for $0 \leq k \leq F_{4n+1} - F_{2n} - 2$ exactly once
and similarly $B$ contains each value
$\gamma(rk+b+1)$ for $0 \leq k \leq F_{4n+1} - F_{2n} - 2$ 
exactly once.
To see this, it suffices to show that
the numbers
$$ F_{4n} i + L_{2n} j $$
for $0 \leq i \leq L_{2n+1} -2$, $0 \leq j \leq F_{2n} - 1$,
represent each multiple $k \cdot L_{2n}$,
for $0 \leq k \leq F_{4n+1} - F_{2n} - 2$, exactly once.
This immediately follows from the observation that
$F_{4n} = F_{2n} L_{2n}$, so (in effect) we are representing
$k$ in the mixed radix system with
place values $(1, L_{2n}, F_{2n})$.

Now we have already observed above that
$B[i,j] - A[i,j] > 0$, so if there exists $k$ with
$0 \leq k \leq F_{4n+1} - F_{2n} - 2$ such that
$s_{rk+a} \not= s_{rk+b}$, then it must be that
$\gamma(rk+a+1) \in (0,1-\theta)$ and
$\gamma(rk+b+1) \in (1-\theta, 1)$.
So we need to find $i,j$ such that
$A[i,j] < 1 - \theta < B[i,j]$.
We claim this occurs uniquely for
$i = L_{2n+1} - 2$ and $j = F_{2n-2}$.  

We can check now that, for these values of $i$ and $j$, we have
\begin{align*}
A[i,j] &= \theta^2 + 2 \theta^{4n} + \theta^{6n+1} - \theta^{4n-2} \\
B[i,j] &= \theta^2 + \sqrt{5} \theta^{2n} + 2 \theta^{4n} +
\theta^{6n+1} - \theta^{2n-1} - \theta^{4n-2} ,
\end{align*}
and so we see
\begin{align}
\theta^2 - \theta^{4n-2} &< A[i,j] < \theta^2 ; \label{e1} \\
\theta^2 < B[i,j] &< \theta^2 + \theta^{2n-3} + 3\theta^{4n} , \label{e2}
\end{align}
giving us the desired inequalities, because $1-\theta = \theta^2$.
It now follows that $s_{i F_{4n} + j L_{2n} + F_{2n-1}-1} = 0$,
while $s_{i F_{4n} + j L_{2n} + L_{2n}-1} = 1$.

It remains to see this is the only possible choice $(i,j)$ for
$A[i,j]<1-\theta< B[i,j]$.  This follows from
the estimates \eqref{e1} and \eqref{e2}, combined with
our calculations of $d, d', d''$ given above:   moving to an adjacent
row or column, or to the top of the previous column, violates the
desired inequality.

Finally, our claim follows from the identity
$$ (L_{2n+1} - 2) F_{4n} + F_{2n-2} L_{2n} = L_{2n} (F_{4n+1} - F_{2n+1} - 1),$$
which can be easily checked.
\end{proof}


\begin{thebibliography}{99}

\bibitem{Alessandri&Berthe:1998}
P.~Alessandri and V.~{Berth\'e}.
\newblock Three distance theorems and combinatorics on words.
\newblock {\em Enseign. Math.} {\bf 44} (1998), 103--132.

\bibitem{Berstel&Seebold:2002}
J.~Berstel and P.~S\'e\'ebold.
\newblock Sturmian words.
\newblock In M.~Lothaire, editor, {\em Algebraic Combinatorics on Words},
  Vol.~90 of {\em Encyclopedia of Mathematics and Its Applications}, pp.
  45--110. Cambridge University Press, 2002.

\bibitem{Florek:1951}
K.~Florek.
\newblock Une remarque sur la {r\'epartition} des nombres $n \xi$ (mod $1$).
\newblock {\em Colloq. Math.} {\bf 2} (1951), 323--324.

\bibitem{Halton:1965}
J.~H. Halton.
\newblock The distribution of the sequence $\lbrace n \xi \rbrace$ ($n = 0, 1,
  2, \ldots$).
\newblock {\em Proc. Cambridge Phil. Soc.} {\bf 61} (1965), 665--670.

\bibitem{Hardy&Wright:1985}
G.~H. Hardy and E.~M. Wright.
\newblock {\em An Introduction to the Theory of Numbers}.
\newblock Oxford University Press, 5th edition, 1985.

\bibitem{Lagarias&Shallit:1997}
J.~C. Lagarias and J.~O. Shallit.
\newblock Linear fractional transformations of continued fractions with bounded
  partial quotients.
\newblock {\em J. {Th\'eorie} Nombres Bordeaux} {\bf 9} (1997), 267--279.
\newblock Corrigenda, {\bf 15} (2003), 741--743.

\bibitem{Shallit:1992}
J.~O. Shallit.
\newblock Real numbers with bounded partial quotients.
\newblock {\em Enseign. Math.} {\bf 38} (1992), 151--187.

\bibitem{Shallit:1996}
J.~O. Shallit.
\newblock Automaticity {IV}: Sequences, sets, and diversity.
\newblock {\em J. {Th\'eorie} Nombres Bordeaux} {\bf 8} (1996), 347--367.

\bibitem{Slater:1950}
N.~B. Slater.
\newblock The distribution of the integers {$N$} for which {$\lbrace \theta N
  \rbrace < \phi$}.
\newblock {\em Proc. Cambridge Phil. Soc.} {\bf 46} (1950), 525--534.

\bibitem{Sos:1957}
V.~T. {S\'os}.
\newblock On the theory of diophantine approximations. {I}.
\newblock {\em Acta Math. Acad. Sci. Hung.} {\bf 8} (1957), 461--471.

\bibitem{Suranyi:1958}
J.~{Sur\'anyi}.
\newblock On the distribution mod $1$ of the sequence $n \alpha$.
\newblock {\em Ann. Univ. Sci. Budapest {E\"otv\"os} Sect. Math.} {\bf 1}
  (1958), 107--111.

\bibitem{Swierczkowski:1958}
S.~{\'Swierczkowski}.
\newblock On successive settings of an arc on the circumference of a circle.
\newblock {\em Fundamenta Math.} {\bf 46} (1958), 187--189.

\bibitem{van.Ravenstein:1988}
T.~van Ravenstein.
\newblock The three gap theorem ({Steinhaus} conjecture).
\newblock {\em J. Austral. Math. Soc. Ser. A} {\bf 45} (1988), 360--370.

\end{thebibliography}
\end{document}